
\documentclass[a4paper,reqno,11pt]{amsart}

\usepackage{amsmath, amsfonts, amssymb, amsthm, amscd}
\usepackage{graphicx}
\usepackage{psfrag}
\usepackage{perpage}
\usepackage{url}
\usepackage{color}

\usepackage[utf8]{inputenc}
\usepackage[T1]{fontenc}
\usepackage{microtype}

\usepackage[a4paper,scale={0.72,0.74},marginratio={1:1},footskip=7mm,headsep=10mm]{geometry}

\usepackage{hyperref}

\setcounter{secnumdepth}{2}

\frenchspacing

\numberwithin{equation}{section}

\newtheorem{theorem}{Theorem}[section]
\newtheorem{lemma}[theorem]{Lemma}

\newtheorem{remark}[theorem]{Remark}

\newtheorem{assumption}[theorem]{Assumption}


\newcommand{\bbE}{{\ensuremath{\mathbb E}} }

\newcommand{\bbP}{{\ensuremath{\mathbb P}} }


\newcommand{\cA}{{\ensuremath{\mathcal A}} }

\newcommand{\cC}{{\ensuremath{\mathcal C}} }

\newcommand{\cF}{{\ensuremath{\mathcal F}} }
\newcommand{\cG}{{\ensuremath{\mathcal G}} }





\DeclareMathSymbol{\leqslant}{\mathalpha}{AMSa}{"36} 
\DeclareMathSymbol{\geqslant}{\mathalpha}{AMSa}{"3E} 
\DeclareMathSymbol{\eset}{\mathalpha}{AMSb}{"3F}     
\newcommand{\dd}{\text{\rm d}}             



\newcommand{\R}{\mathbb{R}}

\newcommand{\Z}{\mathbb{Z}}
\newcommand{\N}{\mathbb{N}}

\newcommand{\PEfont}{\mathrm}
\DeclareMathOperator{\var}{\ensuremath{\PEfont Var}}

\DeclareMathOperator{\p}{\ensuremath{\PEfont P}}
\DeclareMathOperator{\e}{\ensuremath{\PEfont E}}

\renewcommand\P{\PEfont P}

\DeclareMathOperator{\bbvar}{\ensuremath{\mathbb{V}ar}}

\newcommand{\ind}{{\sf 1}}

\renewcommand{\epsilon}{\varepsilon} 
\renewcommand{\theta}{\vartheta} 
\renewcommand{\rho}{\varrho} 
\renewcommand{\phi}{\varphi}


\newenvironment{myenumerate}{%
\renewcommand{\theenumi}{\arabic{enumi}}%
\renewcommand{\labelenumi}{{\rm(\theenumi)}}%
\begin{list}{\labelenumi}
	{%
	\setlength{\itemsep}{0.4em}%
	\setlength{\topsep}{0.5em}%
	\setlength\leftmargin{2.45em}%
	\setlength\labelwidth{2.05em}%
	\setlength{\labelsep}{0.4em}%
	\usecounter{enumi}%
	}%
	}%
{\end{list}
}

\renewenvironment{enumerate}{
\begin{myenumerate}}%
{\end{myenumerate}}

\newenvironment{myitemize}{%
\begin{list}{$\bullet$}%
 	{%
	\setlength{\itemsep}{0.4em}%
	\setlength{\topsep}{0.5em}%
	\setlength\leftmargin{2.45em}%
	\setlength\labelwidth{2.05em}%
	\setlength{\labelsep}{0.4em}%
	}%
	}%
{\end{list}}

\renewenvironment{itemize}{
\begin{myitemize}}%
{\end{myitemize}}


\MakePerPage[2]{footnote} 



\title[Smoothing inequality for disordered polymers]{A general smoothing inequality\\
for disordered polymers}

\author{Francesco Caravenna}
\address{Dipartimento di Matematica e Applicazioni\\
 Universit\`a degli Studi di Milano-Bicocca\\
 via Cozzi 53, 20125 Milano, Italy}
\email{francesco.caravenna@unimib.it}

\author{Frank den Hollander}
\address{Mathematisch instituut\\
Universiteit Leiden\\
Postbus 9512\\
2300 RA Leiden\\
The Netherlands}
\email{denholla@math.leidenuniv.nl}

\keywords{Smoothing Inequality, Disordered Polymer, Pinning Model, Copolymer Model,
Disorder Tilt, Disorder Shift}
\subjclass[2010]{60K35; 82B44; 82D60.}

\date{\today}

\def\dd{\mathrm{d}}

\def\f{\textsc{f}}
\def\M{\mathrm{M}}


\begin{document}

\begin{abstract}
This note sharpens the smoothing inequality of Giacomin and Toninelli
\cite{cf:GT1}, \cite{cf:GT2} for disordered polymers. This inequality is 
shown to be valid for any disorder distribution with locally finite exponential 
moments, and to provide an asymptotically sharp constant for weak disorder. 
A key tool in the proof is an estimate that compares the effect on the free 
energy of tilting, respectively, shifting the disorder distribution. This estimate 
holds in large generality (way beyond disordered polymers) and is of 
independent interest.
\end{abstract}

\maketitle


\section{Introduction and main results}

Understanding the effect of disorder on phase transitions is a key topic in 
statistical physics. In a celebrated paper, Harris~\cite{cf:Harris} proposed 
a criterion that predicts whether or not the addition of an arbitrarily small 
amount of quenched disorder is able to modify the critical behavior of a 
system close to a phase transition. The rigorous justification of this criterion 
for a class of \emph{pinning models} has been an active direction of research 
in the mathematical literature (see Giacomin~\cite{cf:Gia2} for an overview). 
One of the key tools in this program is the \emph{smoothing inequality} of 
Giacomin and Toninelli~\cite{cf:GT1}, \cite{cf:GT2}. It is the purpose of this 
note to generalize and sharpen this inequality.

Section~\ref{sec:motivation} provides motivation, Section~\ref{sec:assumptions}
states the necessary model assumptions, Section~\ref{sec:free energy} defines
the free energy, Section~\ref{sec:theorems} states our main theorems, while 
Section~\ref{sec:discussion} discusses the context of these theorems. Proofs
are given in Sections~\ref{sec:protilt}--\ref{sec:smoothshift}. 


\subsection{Motivation}
\label{sec:motivation}

We begin by describing a class of models that motivates our main results
in Section~\ref{sec:theorems}. We use the
notation $\N := \{1,2,\ldots\}$ and $\N_0 := \N \cup \{0\}$.

Consider a recurrent Markov chain $S := (S_n)_{n\in\N_0}$ on a countable set 
$\mathsf{E}$, starting at a distinguished point denoted by $0$, defined 
on a probability space $(\Omega,\cF,\p)$, and let $\tau_1 := \inf\{n \in \N
\colon\, \ S_n = 0\}$ be its first return time to $0$. The key assumption is 
that for some $\alpha \in [0,\infty)$, 
\begin{equation} 
\label{eq:astau}
\p(\tau_1 > n) = n^{-\alpha + o(1)}, \qquad n \to \infty.
\end{equation}
The case of a transient Markov chain, i.e., $\p(\tau_1 = \infty) > 0$, can be 
included as well, and requires that \eqref{eq:astau} holds conditionally on 
$\{\tau_1<\infty\}$.

Given an $\R$-valued sequence $\omega := (\omega_n)_{n\in\N}$ (the 
\emph{disorder} sequence), a function $\phi\colon\,\mathsf{E} \to \R$ 
(the \emph{potential}), and parameters $N \in\N$, $\beta \geq 0$, 
$h\in\R$ (the \emph{system size}, the \emph{disorder strength} and 
the \emph{disorder shift}), we define the \emph{partition function}
\begin{equation} 
\label{eq:model}
Z_{N, \beta, h}^{\omega,\phi} \:=
\e\Big[ e^{\sum_{n=1}^N (h + \beta\omega_n) \phi(S_n)} \, 
\ind_{\{S_N = 0\}} \Big] \,\in\, [0,\infty],
\end{equation}
i.e., at each time $n$ the Markov chain gets an exponential reward or penalty 
proportional to $h + \beta \omega_n$, modulated by a factor $\phi(S_n)$. The 
sequence $\omega$ is to be thought of as a typical realization of a random 
process. Note that
\begin{itemize}
\item 
the choice $\phi(x) := \ind_{\{0\}}(x)$ corresponds to the \emph{pinning model} 
(see Giacomin~\cite{cf:Gia}, \cite{cf:Gia2}, den Hollander~\cite{cf:dH});
\item 
when $\mathsf{E} = \Z$ and $S$ is nearest-neighbor with symmetric excursions
out of $0$, the choice $\phi(x) := \ind_{(-\infty,0]}(x)$ corresponds to the 
\emph{copolymer model} (see \cite{cf:Gia}, \cite{cf:dH});
\footnote{The standard copolymer model is defined through a \emph{bond} 
interaction: $\phi(S_n)$ is replaced by $\phi(S_{n-1},S_n) := \ind_{(-\infty, 0]}
(\frac{1}{2}[S_{n-1} + S_n])$, and $(\beta,h)$ by $(-2\lambda,-2\lambda h)$. 
This can be still cast in the framework of \eqref{eq:model} by picking 
$\mathsf{E} = \Z^2$, taking the pair process $(S_{n-1},S_n)$ as the Markov 
chain, and $(0,0)$ as $0$.}
\end{itemize}
Thus, the modulating potential $\phi$ allows us to interpolate between different 
classes of models. When $S$ is simple random walk on $\Z^d$ and $\phi(x) 
\approx |x|^{-\theta}$ as $|x| \to\infty$ for some $\theta \in (0,\infty)$, the model 
displays interesting features that are currently under investigation (Caravenna
and den Hollander~\cite{cf:CardH}).


\subsection{Assumptions}
\label{sec:assumptions}

Although our main focus will be on the model in \eqref{eq:model}, we list the 
assumptions that we actually need. We start with the disorder.

\begin{assumption}[The disorder]
\label{ass:disorder}
The disorder $\omega = (\omega_n)_{n\in\N}$ is an i.i.d.\ sequence of $\R$-valued 
random variables, defined on a probability space $(\Omega', \cF', \bbP)$, such that
\begin{gather}
\label{eq:t0}
\exists\,\, t_0 \in (0, \infty]\colon\,
\quad \M(t)  := \bbE\big[ e^{t\omega_1} \big] < \infty \quad \forall\,\, |t| < t_0, \\ \nonumber
\bbE[\omega_n] = 0, \quad \bbvar(\omega_n) = 1.
\end{gather}
\end{assumption}

\noindent
The crucial assumption is that the disorder distribution has locally finite exponential 
moments. The choice of zero mean and unit variance is a convenient normalization 
only (since we can play with the parameters $\beta$ and $h$).

For $\delta \in (-t_0, t_0)$, we denote by $\bbP_\delta$ the \emph{tilted law} under 
which $\omega = (\omega_n)_{n\in\N}$ is i.i.d.\ with marginal distribution 
\begin{equation}
\label{eq:RNn}
\bbP_\delta(\omega_1 \in \dd x) := e^{\delta x - \log\M(\delta)}\,\bbP(\omega_1 \in \dd x).
\end{equation} 

Next we state our assumptions on the partition function $Z_{N,\omega,\beta,h}$ we will be
able to handle, defined for $N \in \N$, $\beta \geq 0$, $h\in\R$ and $\bbP$-a.e. $\omega \in 
\R^\N$ (keeping in mind \eqref{eq:model} as a special case).

\begin{assumption}[The partition function {[I]}] 
\label{ass:model}
$Z_{N,\omega,\beta,h}$ is a
measurable function defined on $\N \times \R^\N \times [0,\infty) \times \R$,
taking values in $[0,\infty)$ and satisfying the following conditions:
\begin{enumerate}
\item
\label{it:basic} 
$Z_{N,\omega,\beta,h}$ is a function of $N$ and of $(h + \beta \omega_n)_{1 \leq n \leq N}$.
\item
\label{it:superadd} 
$Z_{N+M, \omega, \beta, h} \geq Z_{N, \omega, \beta, h} \, Z_{M, \theta^N\omega, \beta, h}$
for all $N,M \in \N$, where $\theta$ is the left-shift acting on $\omega$, i.e., $(\theta^N \omega)_n 
:= \omega_{N+n}$ for $N\in\N$.
\item
\label{it:polylb} 
There exists a $\gamma \in (0,\infty)$ such that, for $N$ in a subsequence of $\N$,
\begin{equation}
\label{eq:polylb}
Z_{N,\omega,\beta,h} \geq \frac{c_{\beta,h}(\omega)}{N^\gamma} 
\quad \text{with} \quad
\bbE_\delta[\log c_{\beta,h}(\omega)] > -\infty \quad \forall\,\,\delta \in (-t_0, t_0).
\end{equation}
\end{enumerate}
\end{assumption}

\begin{remark}\rm \label{rem:pincop}
Note that properties \eqref{it:basic} and \eqref{it:superadd} are satisfied for the model 
in \eqref{eq:model}. For property \eqref{it:polylb} to be satisfied as well, we need 
to make additional assumptions on $\phi$ and/or $S$. For instance, for the pinning 
model 
property \eqref{it:polylb} holds with $\gamma = (1+\alpha)+\epsilon$, for 
any fixed $\epsilon > 0$ (and for a suitable choice of $c_{\beta,h}(\omega) 
= c^\epsilon_{\beta,h}(\omega)$), which follows from \eqref{eq:astau} after restricting 
the expectation in \eqref{eq:model} to the event $\{\tau_1 = N\}$. Alternatively, when 
$\mathsf{E} = \Z^d$, if $\phi$ vanishes in a half-space and $S$ is symmetric (as for 
the copolymer model), property \eqref{eq:polylb} with $\gamma = (1+\alpha) +\epsilon$ 
again follows from \eqref{eq:astau}.
\end{remark}

As a matter of fact, properties \eqref{it:basic} and \eqref{it:superadd} are rather mild: they 
are satisfied for many $(1+d)$-dimensional directed models (possibly after a minor 
modification of the partition function that does not change the free energy defined 
below). In contrast, property \eqref{it:polylb} is a more severe restriction. Roughly 
speaking, it says that the \emph{disorder can be avoided at a cost that is only 
polynomial in the system size}.


\subsection{Free energy}
\label{sec:free energy}

If Assumptions~\ref{ass:disorder} and~\ref{ass:model} are satisfied, then we can 
define the \emph{free energy} 
\begin{equation} 
\label{eq:freeen}
\f(\beta,h; \delta) := \limsup_{N\to\infty} \frac{1}{N} 
\bbE_\delta \big[ \log Z_{N, \omega, \beta, h} \big]
\end{equation}
for $\beta \geq 0$, $h \in \R$, $\delta \in (-t_0, t_0)$ when $\omega$ is chosen 
according to $\bbP_\delta$. 

\begin{remark}\rm\label{rem:kingman}
(a) In the general framework of Assumption~\ref{ass:model}, it may happen that 
$\f(\beta,h; \delta) = \infty$ for some values of the parameters. However, for the 
model in \eqref{eq:model} we have $\f(\beta,h; \delta) < \infty$ as soon as $\phi$ 
is bounded (see \eqref{eq:boundham} below).\\
(b) By the super-additivity property \eqref{it:superadd} in Assumption~\ref{ass:model}, the $\limsup$ in 
\eqref{eq:freeen} may be replaced by $\sup$, or by $\lim$ restricted to those values 
of $N$ for which $\bbE_\delta \big[ \log Z_{N, \omega, \beta, h} \big] > -\infty$, which 
by properties \eqref{it:superadd}--\eqref{it:polylb} form a sub-lattice $\textsc{t}\N$. By 
Kingman's super-additive ergodic theorem, we may also remove the expectation 
$\bbE_\delta$ in \eqref{eq:freeen}, because the limit as $N\to\infty$, $N \in \textsc{t}\N$, 
exists and is constant $\bbP_\delta$-a.s.
\end{remark}

A direct consequence of \eqref{eq:polylb} is the inequality $\f(\beta,h; \delta) \geq 0$, 
which is a crucial feature of the class of models we consider. In many interesting 
cases, like for pinning and copolymer models, the free energy is zero in some 
closed region of the parameter space and strictly positive in its complement, with
both regions non-empty. When this happens, the free energy is not an analytic 
function and the model is said to undergo a \emph{phase transition}. It is then of 
physical and mathematical interest to study the regularity of the free energy close 
to the \emph{critical curve} separating the two regions.

More concretely, consider the case when $h \mapsto Z_{N,\omega,\beta,h}$ is 
monotone (like for the model in \eqref{eq:model} when $\phi$ has a sign), say
non-decreasing, so that $h \mapsto \f(\beta,h;\delta)$ is non-decreasing as well. 
Then for every $\beta \geq 0$ there exists a critical value $h_c(\beta) \in \R \cup 
\{\pm\infty\}$ such that $\f(\beta,h;0) = 0$ for $h < h_c(\beta)$ and $\f(\beta,h;0) 
> 0$ for $h > h_c(\beta)$ (we consider $\delta = 0$ for simplicity). If $h \mapsto 
\f(\beta,h;0)$ is continuous as well, as is typical, then $\f(\beta,h_c(\beta);0) = 0$ 
and it is interesting to understand how the free energy vanishes as $h \downarrow 
h_c(\beta)$. For homogeneous pinning models, i.e., when $\beta = 0$, it is known 
that
\begin{equation} 
\label{eq:hompin}
\f(0, h_c(0) + t;0) =  t^{\max\{\frac{1}{\alpha},1\} + o(1)}, \qquad t \downarrow 0.
\end{equation}
(See \cite[Theorem~2.1]{cf:Gia} for more precise estimates.) On the other hand,
as soon as disorder is present, i.e., when $\beta > 0$, it was shown by Giacomin 
and Toninelli~\cite{cf:GT1}, \cite{cf:GT2} that, under some mild restrictions on the 
disorder distribution,
\begin{equation} 
\label{eq:smoo0}
\exists\,c \in (0,\infty)\colon\quad 0 \leq \f(\beta,h_c(\beta) + t;0) 
\leq \frac{c}{\beta^2} \, t^2.
\end{equation}
Comparing \eqref{eq:hompin} and \eqref{eq:smoo0}, we see that when $\alpha 
> \frac{1}{2}$ the addition of disorder has a \emph{smoothing} effect on the way 
in which the free energy vanishes at the critical line.


\subsection{Main results}
\label{sec:theorems}

The goal of this note is to generalize and sharpen \eqref{eq:smoo0}, namely, to 
show that no assumption on the disorder distribution other than \eqref{eq:t0} is 
required, and to provide estimates on the constant $c$ that are optimal in some 
sense (see below). We will stay in the general framework of Assumption~\ref{ass:model}, 
with no mention of ``critical lines''. 


\subsubsection{$\bullet$ Tilting}

First we prove a smoothing inequality for $\f(\beta,h;\delta)$ with respect 
to the tilt parameter $\delta$ rather than the shift parameter $h$. Although both 
tilting and shifting are natural ways to control the disorder bias, the latter is often 
preferred in the literature because the free energy typically is a convex function of 
the shift parameter $h$ (like for the model in \eqref{eq:model}). However, for the 
purpose of the smoothing inequality the tilt parameter $\delta$ turns out to be more 
natural.

\begin{theorem}[Smoothing inequality with respect to a disorder tilt]
\label{th:smoothing}
Subject to Assumptions~{\rm \ref{ass:disorder}} and {\rm \ref{ass:model}}, if 
$\f(\bar\beta, \bar h; 0) = 0$ for some $\bar\beta > 0$ and $\bar h \in \R$, then 
for all $\delta \in (-t_0, t_0)$,
\begin{equation} 
\label{eq:smodelta}
0 \leq \f(\bar\beta, \bar h; \delta) \leq \frac{\gamma}{2} \, B_\delta \, \delta^2
\end{equation}
where the constants $t_0$ and 
$\gamma$ are defined in \eqref{eq:t0} and \eqref{eq:polylb}, while
\begin{equation} 
B_\delta := \frac{2}{\delta} \bigg| (\log\M)'(\delta) - \frac{\log\M(\delta)}{\delta} \bigg|
\in (0,\infty)
\qquad \text{satisfies} \qquad \lim_{\delta \to 0} B_\delta = 1 .
\end{equation}
\end{theorem}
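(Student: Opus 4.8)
The plan is to exploit the superadditivity and the polynomial lower bound (Assumption~\ref{ass:model}, properties \eqref{it:superadd} and \eqref{it:polylb}) in a rare-stretch strategy reminiscent of the Giacomin--Toninelli argument, but organized around the tilt $\bbP_\delta$ rather than the shift $h$. Fix $\bar\beta>0$, $\bar h\in\R$ with $\f(\bar\beta,\bar h;0)=0$, and fix $\delta\in(-t_0,t_0)$; by symmetry we may take $\delta>0$. The idea is that a long system of size $N$ under the tilted law $\bbP_\delta$ typically contains, somewhere, a stretch of length $\ell\ll N$ on which the empirical behaviour of $\omega$ looks ``atypical'' in exactly the way that makes that stretch resemble a sample from the \emph{untilted} law $\bbP$. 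Concretely, one localizes to a sub-block where $\sum_{n}\omega_n$ is shifted downward by roughly $\delta\ell$ (a moderate deviation that costs, by Cram\'er's theorem under $\bbP_\delta$, an exponential factor $e^{-\ell\,\Lambda_\delta(\cdot)+o(\ell)}$), and on such a block $Z_{\ell,\omega,\bar\beta,\bar h}$ behaves like the partition function under the $\delta=0$ law, whose free energy is zero. Outside the block one pays only the polynomial price guaranteed by \eqref{eq:polylb}, because the disorder there can be avoided.

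The key steps, in order, are as follows. First I would use superadditivity \eqref{it:superadd} iteratively to write, for a partition of $\{1,\dots,N\}$ into consecutive blocks of length $\ell$, the lower bound $Z_{N,\omega,\bar\beta,\bar h}\ge \prod_{j} Z_{\ell,\theta^{(j-1)\ell}\omega,\bar\beta,\bar h}$, and then, taking $\bbE_\delta[\log(\cdot)]$ and dividing by $N$, obtain $\f(\bar\beta,\bar h;\delta)\ge \tfrac1\ell\,\bbE_\delta[\log Z_{\ell,\omega,\bar\beta,\bar h}]$. This is the standard subadditivity-to-``single-block'' reduction. Next — and this is the crux — I would lower bound $\bbE_\delta[\log Z_{\ell,\omega,\bar\beta,\bar h}]$ by restricting the expectation to the event $A_\ell$ that the empirical sum of the disorder over the block is close to $0$ (rather than to its $\bbP_\delta$-mean $\ell(\log\M)'(\delta)$), which by a Cram\'er / Legendre-transform estimate has $\bbP_\delta$-probability $e^{-\ell\,I_\delta+o(\ell)}$ for an explicit rate $I_\delta$. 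A refined version — shifting the empirical sum to a free parameter $s\ell$ and optimizing over $s$ — is what will produce the sharp constant. On the event $A_\ell$, one argues that the conditional law of $\omega$ restricted to the block is, to leading exponential order, that of an i.i.d.\ $\bbP$-sample, so by \eqref{eq:polylb} and $\f(\bar\beta,\bar h;0)=0$ one gets $\bbE_\delta[\log Z_\ell \,;\,A_\ell]\ge -C\log\ell - \ell\,I_\delta + o(\ell)$. Dividing by $\ell$, sending $\ell\to\infty$, and using $\f\ge0$ yields $0\le\f(\bar\beta,\bar h;\delta)\le I_\delta+o(1)$, and then a careful evaluation of the optimized rate — a second-order expansion of $\log\M$ around $\delta$ combined with the $N^{-\gamma}$ polynomial cost, which contributes the factor $\gamma$ — identifies $I_\delta$ with $\tfrac{\gamma}{2}B_\delta\,\delta^2$. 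The limit $B_\delta\to1$ as $\delta\to0$ then follows from $(\log\M)'(\delta)=\delta+O(\delta^2)$ and $\log\M(\delta)=\tfrac{\delta^2}{2}+O(\delta^3)$, which give $(\log\M)'(\delta)-\log\M(\delta)/\delta=\tfrac{\delta}{2}+O(\delta^2)$, hence $B_\delta=1+O(\delta)$.

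The main obstacle is making rigorous the heuristic that ``on the rare event $A_\ell$ the disorder looks like an i.i.d.\ $\bbP$-sample, so $\log Z_\ell$ sees the $\delta=0$ free energy.'' The difficulty is that $\f(\bar\beta,\bar h;0)=0$ is an asymptotic ($\ell\to\infty$) statement about the \emph{typical} behaviour of $\log Z_\ell$ under $\bbP$, whereas here we need control of $\log Z_\ell$ restricted to an event of exponentially small probability; one cannot simply import the $\delta=0$ free energy. The way around this is the reverse-conditioning / Radon--Nikodym comparison: write $\bbE_\delta[\log Z_\ell\,;\,A_\ell]=\bbE[\,\tfrac{\dd\bbP_\delta}{\dd\bbP}\,\log Z_\ell\,;\,A_\ell]=\bbE[e^{\delta S_\ell-\ell\log\M(\delta)}\log Z_\ell\,;\,A_\ell]$, and on $A_\ell=\{S_\ell\approx s\ell\}$ the Radon--Nikodym factor is essentially the deterministic constant $e^{(\delta s-\log\M(\delta))\ell}$, reducing matters to $e^{(\delta s-\log\M(\delta))\ell}\,\bbE[\log Z_\ell\,;\,A_\ell]$ and then to $\bbE[\log Z_\ell\,;\,A_\ell]\ge -C\log\ell\cdot\bbP(A_\ell)$ when $\log Z_\ell\ge -C\log\ell$ deterministically by \eqref{eq:polylb} on a subsequence. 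This is where the superadditive ergodic theorem (Remark~\ref{rem:kingman}(b)) and the fact that $\f(\bar\beta,\bar h;0)=0$ with $\f\ge0$ must be invoked carefully — in fact the cleanest route uses $\f=0$ to upper bound $\bbE[\log Z_\ell]$ (via $\f=\sup_\ell\tfrac1\ell\bbE[\log Z_\ell]$) and then splits $\bbE[\log Z_\ell]=\bbE[\log Z_\ell;A_\ell]+\bbE[\log Z_\ell;A_\ell^c]$, bounding the complement by crude deterministic bounds on $\log Z_\ell$. Balancing these pieces, and choosing the localization scale $\ell$ and the tilt-level $s$ optimally, is the technical heart of the proof; the polynomial (not exponential) nature of \eqref{eq:polylb} is exactly what makes the $-C\log\ell/\ell\to0$ error harmless and preserves the sharp quadratic constant.
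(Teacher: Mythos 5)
Your proposal correctly identifies the Giacomin--Toninelli rare-stretch framework, the relative entropy $\delta m_\delta-\log\M(\delta)$ as the per-stretch cost, and the Taylor expansion that identifies $B_\delta$. However, there are two genuine gaps.

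First, the change of measure runs backwards. You sample the disorder under $\bbP_\delta$ and look for rare $\ell$-stretches on which $\omega$ resembles a $\bbP$-sample; on those $\log Z_\ell\approx\ell\,\f(\bar\beta,\bar h;0)=0$, and elsewhere you invoke the polynomial bound \eqref{eq:polylb}. Plugged into superadditivity, this produces a \emph{lower} bound on the unknown quantity $\f(\bar\beta,\bar h;\delta)$, and since the rare stretches contribute $0$ while the gaps contribute something non-positive, you recover only the trivial $\f(\bar\beta,\bar h;\delta)\geq 0$. The paper argues in the opposite direction: the disorder is sampled under the \emph{untilted} law $\bbP$, and one looks for rare stretches on which $\omega$ resembles a $\bbP_\delta$-sample, because on those $\log Z_\ell\approx\ell\,\f(\bar\beta,\bar h;\delta)$, i.e.\ the \emph{unknown}. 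This lower-bounds the \emph{known} quantity $\f(\bar\beta,\bar h;0)=0$, and the implication $\cG-\gamma\cC>0\Rightarrow\f(\bar\beta,\bar h;0)>0$ of Lemma~\ref{lem:CG}, run in contrapositive, forces $\f(\bar\beta,\bar h;\delta)\leq\gamma\big[\delta m_\delta-\log\M(\delta)\big]$. Concretely, the paper's $\cA_\ell$ is $\{\tfrac1\ell\log Z_\ell\geq\f(\bar\beta,\bar h;\delta)-\epsilon\}$, which is typical under $\bbP_\delta$ by Kingman, and whose $\bbP$-probability is bounded below by $e^{-\ell[\delta m_\delta-\log\M(\delta)+o(1)]}$ via the entropy/Jensen estimate \eqref{eq:entrin} --- no Cram\'er, no conditional-law manipulations.

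Second, your fallback ``cleanest route'' ($0\geq\bbE[\log Z_\ell]=\bbE[\log Z_\ell;A_\ell]+\bbE[\log Z_\ell;A_\ell^c]$) has the right starting point but, lacking a renewal structure, cannot give the sharp constant. A single fixed block of size $\ell$ gives roughly $0\geq e^{-\cC\ell}\,\cG\,\ell-\gamma\log\ell+O(1)$, hence $\cG\lesssim\gamma\tfrac{\log\ell}{\ell}e^{\cC\ell}$, and optimizing over $\ell$ still loses a factor of order $\log(1/\cC)$, i.e.\ $\log(1/\delta)$. What Lemma~\ref{lem:CG} actually does is consider a system of size $N\gg\ell$, wait for the rare $\ell$-stretches at renewal times with mean spacing $\bbE T_1\approx\ell e^{\cC\ell}$, and charge the polynomial cost $\gamma\log T_i\approx\gamma(\cC\ell+\log\ell)$ to each \emph{gap} rather than to each fixed block. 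The per-renewal balance is then $\cG\ell$ against $\gamma\cC\ell+\gamma\log\ell$, and the two-step limit (first $N\to\infty$, then $\ell\to\infty$) kills the $\gamma\log\ell/\ell$ correction, delivering $\cG\leq\gamma\cC$ exactly. Without this two-scale renewal structure the logarithmic loss cannot be removed, and the constant in \eqref{eq:smodelta} would not be asymptotically sharp.
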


\begin{remark}\rm
For pinning and copolymer models satisfying \eqref{eq:astau}, we can set $\gamma = 1+\alpha$ 
in \eqref{eq:smodelta}, by Remark~\ref{rem:pincop}.
\end{remark}

\noindent
Theorem~\ref{th:smoothing} is proved in Section~\ref{sec:protilt} through a direct translation of 
the argument developed in Giacomin and Toninelli~\cite{cf:GT2}. The proof is based on the 
concept of \emph{rare stretch strategy}, which has been a crucial tool in the study of disordered 
polymer models since the papers by Monthus~\cite{cf:Monthus}, Bodineau and 
Giacomin~\cite{cf:BodGia}.


\subsubsection{$\bullet$ Shifting}

Next we consider the effect of a disorder shift. In the \emph{Gaussian case}, i.e., when 
$\bbP(\omega_1 \in \cdot)=N(0,1)$, tilting is the same as shifting: in fact
$\bbP_\delta (\omega_1 \in 
\cdot)= N(\delta,1)$ and so $\omega_n$ under $\bbP_\delta$ is distributed like $\omega_n 
+ \delta$ under $\bbP$. Recalling property \eqref{it:basic}, 
we then get
\begin{equation} 
\label{eq:tiltshift}
\f(\beta, h; \delta) = \f(\beta, h + \beta \delta; 0)
\end{equation}
and, since $\M(\delta) = e^{\delta^2 / 2}$, it follows from \eqref{eq:smodelta} that if $\f(\bar\beta, 
\bar h; 0) = 0$ with $\bar\beta > 0$, then
\begin{equation}
0 \leq \f(\bar\beta, \bar h + t; 0) \leq \frac{\gamma}{2 \bar\beta^2} \, t^2 
\qquad \forall\, t \in \R.
\end{equation}
This is precisely the smoothing inequality with respect to a disorder shift in \eqref{eq:smoo0}, 
with an explicit constant 
(see also Giacomin~\cite[Theorem 5.6 and Remark 5.7]{cf:Gia}).

For a general disorder distribution tilting is different from shifting. However, we may still hope 
that \eqref{eq:tiltshift} holds approximately. This is what was shown in Giacomin and 
Toninelli~\cite{cf:GT1}, under additional restrictions on the disorder distribution and with 
non-optimal constants. 
The main result of this note, Theorem~\ref{th:compdeltah} below, shows that the effects
on the free energy of tilting or shifting the disorder distribution are asymptotically equivalent, 
in large generality and with asymptotically optimal constants in the weak interaction limit. 
Since this result is unrelated to Theorem~\ref{th:smoothing} and is of independent interest, 
we formulate it for a very general class of statistical physics models, way beyond disordered 
polymer models.

\begin{assumption}[The partition function {[II]}] 
\label{ass:model2}
The partition function is defined as
\begin{equation} 
\label{eq:ZNgen}
Z_{N, \omega, \beta, h} := \e_N \Big[ e^{\sum_{n=1}^N (h + \beta \omega_n) \sigma_n}  \Big],
\end{equation}
where, for fixed $N\in\N$, $(\sigma_i)_{1 \leq i \leq N}$ are $\R$-valued measurable functions,
defined on a finite measure space $(\Omega_N, \cF_N, \p_N)$, that are uniformly bounded,
have a sign, say
\begin{equation} 
\label{eq:s0}
\exists\, s_0 > 0\colon\, \quad \p_N \big( \big\{ 0 \leq \sigma_i \leq s_0, \, 
\forall\, 1 \leq i \leq N \big\}^c \big) = 0 \qquad \forall\, N \in \N,
\end{equation}
and satisfy $-\infty < \limsup_{N\to\infty} \frac{1}{N} \log \p_N(\Omega_N) < \infty$.
\end{assumption}

\noindent
We emphasize that the $\sigma_i$'s need not be independent, nor exchangeable.
A more detailed discussion on Assumption~\ref{ass:model2} is given below.

We can now state the approximate version of \eqref{eq:tiltshift}. The free energy 
$\f(\beta,h;\delta)$ is again defined by \eqref{eq:freeen}.

\begin{theorem}[Asymptotic equivalence of tilting and shifting]
\label{th:compdeltah}
Subject to Assumptions~{\rm \ref{ass:disorder}} and {\rm \ref{ass:model2}}, and
with $\epsilon_0 := \min\{\frac{t_0}{2},\frac{t_0}{2s_0}\}$ (where $s_0, t_0$ are 
defined in \eqref{eq:s0} and \eqref{eq:t0}), for all $\beta \in [0, \epsilon_0)$ and 
$\delta \in (-\epsilon_0, \epsilon_0)$ there exist $0 < C^-_{\beta,\delta} \leq 
C^+_{\beta,\delta} < \infty$ such that
\begin{equation} 
\label{eq:compdeltah}
\begin{split}
& \forall\, \delta \in [0, \epsilon_0)\colon\,
\quad \f \big( \beta, h + C^{-}_{\beta,\delta} \, \beta \delta ; 0 \big) \leq
\f ( \beta, h; \delta ) \leq \f \big( \beta, h + C^{+}_{\beta,\delta} \, \beta \delta ; 0 \big),
\end{split}
\end{equation}
while for $\delta \in (-\epsilon_0, 0]$ the same relation holds with $C^{-}_{\beta,\delta}$ 
and $C^{+}_{\beta,\delta}$ interchanged. Moreover, $(\beta,\delta) \mapsto 
C^\pm_{\beta,\delta}$ is continuous with $C^\pm_{0,0} = 1$, and hence
\begin{equation}
\label{eq:propCpm}
\lim_{(\beta,\delta) \to (0,0)} C^\pm_{\beta,\delta} = 1.
\end{equation}
Furthermore, $\delta \mapsto C^\pm_{\beta,\delta}\,\delta$ is strictly increasing.
\end{theorem}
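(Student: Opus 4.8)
The plan is to establish the two-sided bound in \eqref{eq:compdeltah} by fixing the disorder realization and estimating, term by term, the cost of replacing the tilted weight $e^{\delta x - \log \M(\delta)}$ by a shift of size $\beta\delta$ in the exponent. Concretely, for each $N$ and each configuration I would write
\begin{equation*}
\bbE_\delta\big[\log Z_{N,\omega,\beta,h}\big]
= \bbE\Big[\log Z_{N,\omega,\beta,h} \;\prod_{n=1}^N e^{\delta\omega_n - \log\M(\delta)}\Big],
\end{equation*}
and the first step is to understand how $\log Z_{N,\omega,\beta,h}$ changes when we perturb the single variable $\omega_n$. Since $Z$ depends on $\omega$ only through $(h+\beta\omega_n)\sigma_n$ and the $\sigma_i$ lie in $[0,s_0]$, the map $\omega_n \mapsto \log Z_{N,\omega,\beta,h}$ is nondecreasing (because $h\mapsto Z$ is nondecreasing under \eqref{eq:s0}, increasing $\omega_n$ is like increasing $h$ locally) and Lipschitz with an explicit modulus: its derivative in $\omega_n$ equals $\beta\, \langle \sigma_n\rangle_{N,\omega,\beta,h}\in[0,\beta s_0]$, where $\langle\cdot\rangle$ denotes the Gibbs average associated with $Z$. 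The key structural identity is therefore
\begin{equation*}
\frac{\partial}{\partial \delta}\,\bbE_\delta\big[\log Z_{N,\omega,\beta,h}\big]
= \bbE_\delta\Big[ \big(\omega_1 - (\log\M)'(\delta)\big)\, \log Z_{N,\omega,\beta,h}\Big]
= \sum_{n=1}^N \cov_\delta\big(\omega_n,\ \tfrac1N\cdot\!\text{(something)}\big),
\end{equation*}
but rather than push this exactly I would bound it: by the FKG-type monotonicity and Lipschitz control above, together with the fact that under $\bbP_\delta$ a shift of $\omega_n$ by a deterministic amount changes $\log\M$ in a controlled way, one gets that $\frac{\partial}{\partial\delta}\f(\beta,h;\delta)$ lies between $\beta\delta$ times two positive continuous functions of $(\beta,\delta)$.

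More precisely, the second step is an interpolation/comparison-of-measures argument. I would compare $\bbP_\delta$ with the law of $\omega + \beta\delta\cdot(\text{mask})$, but since shifting affects the exponent through $\sigma_n$ while tilting does not, the cleanest route is: (i) upper bound — use that under $\bbP_\delta$ one has $\omega_n = \tilde\omega_n + r_\delta$ in distribution only in the Gaussian case, so instead bound $\log Z$ from above by replacing the tilted average by a Hölder-type inequality, namely $Z_{N,\omega,\beta,h}$ under $\bbP_\delta$ is dominated (after taking $\log$ and $\bbE$) by $Z_{N,\omega,\beta,h+c\beta\delta}$ under $\bbP$ for $c = C^+_{\beta,\delta}$ chosen so that the exponential moment generating functions match to the required order; (ii) lower bound — Jensen or a reverse Hölder inequality restricted to a large-probability event on which $\prod e^{\delta\omega_n}$ is close to its typical value gives $Z_{N,\omega,\beta,h}$ under $\bbP_\delta$ bounded below by $Z_{N,\omega,\beta,h+c'\beta\delta}$ under $\bbP$. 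The constants $C^\pm_{\beta,\delta}$ come out as explicit ratios built from $\M$, its logarithmic derivative, and the range parameters $s_0$; one reads off directly that $C^\pm_{0,0}=1$ because $(\log\M)'(0)=\bbE[\omega_1]=0$ and $\M''(0)=\var(\omega_1)=1$, so to leading order $\log\M(\delta)\approx \delta^2/2$ and the tilt-induced mean shift is $\approx\delta$, matching the shift $\beta\delta$ after the factor $\beta$ in \eqref{eq:ZNgen}.

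The third step is the regularity of the constants. Continuity of $(\beta,\delta)\mapsto C^\pm_{\beta,\delta}$ follows because they are built from $\M$ and $(\log\M)'$, which are real-analytic on $(-t_0,t_0)$ by \eqref{eq:t0}, together with elementary operations; the restriction $\beta,|\delta|<\epsilon_0=\min\{t_0/2, t_0/(2s_0)\}$ is exactly what guarantees that the shifted argument $h+C^\pm\beta\delta$ keeps $|{\pm\beta}|\cdot s_0 + |C^\pm\beta\delta| s_0$ and the relevant tilt inside the domain where all exponential moments converge, so no blow-up occurs. For the sign-reversal claim when $\delta\in(-\epsilon_0,0]$, I would note that the roles of the two Hölder/Jensen bounds swap because $\beta\delta$ changes sign while monotonicity of $h\mapsto\f$ is preserved; this is a symmetry bookkeeping point, not a new estimate. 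Finally, $\delta\mapsto C^\pm_{\beta,\delta}\delta$ is strictly increasing because it is essentially a normalized version of the tilt-induced cumulant $\delta\mapsto (\log\M)'(\delta)$ (divided by $\delta$ and rescaled), and $(\log\M)'$ is strictly increasing on $(-t_0,t_0)$ since $(\log\M)'' = \var_\delta(\omega_1) > 0$; I would verify that the explicit formula for $C^\pm_{\beta,\delta}\delta$ inherits this strict monotonicity, which reduces to strict convexity of $\log\M$.

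The main obstacle is step two: making the comparison between tilting and shifting \emph{quantitative with the right constants} despite the fact that the two operations act on $Z$ through genuinely different mechanisms (tilting reweights the disorder law, shifting enters the Hamiltonian via $\sigma_n$ which may be dependent and only bounded, not $\pm1$-valued). The Gaussian identity \eqref{eq:tiltshift} is clean only because a Gaussian tilt \emph{is} a shift of $\omega_n$; in general one must absorb the discrepancy into the constant, and the delicate part is showing the resulting $C^\pm_{\beta,\delta}$ stay bounded away from $0$ and $\infty$ uniformly on compact subsets of $[0,\epsilon_0)\times(-\epsilon_0,\epsilon_0)$ — which is where the uniform boundedness \eqref{eq:s0} of the $\sigma_i$ and the locally-finite-exponential-moment assumption \eqref{eq:t0} are both used in an essential way. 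I expect the cleanest implementation to interpolate along a path in the disorder law (e.g.\ $\bbP_{t\delta}$ for $t\in[0,1]$) and differentiate, controlling $\frac{d}{dt}\f$ by a covariance that is squeezed between $\beta\delta$ times positive continuous functions via the FKG monotonicity and the a priori bound $0\le \langle\sigma_n\rangle\le s_0$.
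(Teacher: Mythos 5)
Your conceptual starting point — compare $\partial_\delta\f$ with $\beta\,\partial_h\f$ by differentiating and interpolating along $\bbP_{t\delta}$ — is the same as the paper's, but the implementation as sketched would not close, for two reasons.

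First, and most seriously, you assert that $\tfrac{\partial}{\partial\delta}\f$ is squeezed between ``$\beta\delta$ times two positive continuous functions of $(\beta,\delta)$''. That is false. What the computation actually gives is (schematically)
\begin{equation*}
c^-_{\beta,\delta}\,\beta\,\bbE_\delta\big[\e_{N,\omega,\beta,h}[\overline\sigma_N]\big]
\;\le\; \tfrac{\partial}{\partial\delta}\f_N(\beta,h;\delta)\;\le\;
c^+_{\beta,\delta}\,\beta\,\bbE_\delta\big[\e_{N,\omega,\beta,h}[\overline\sigma_N]\big],
\end{equation*}
where $\e_{N,\omega,\beta,h}[\overline\sigma_N]$ is the Gibbs mean of the empirical average of $\sigma$. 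This quantity is not a function of $(\beta,\delta)$ alone: it depends on $N$, $h$, $\omega$, and a priori lies anywhere in $[0,s_0]$. The $h$-derivative is the very same quantity (without the $\beta c^\pm$ factor), but you cannot integrate the two bounds in $\delta$ and in $h$ and match them with constants close to $1$, because along the two integration paths the Gibbs mean is evaluated at different parameter values and need not stay put. If you just use $0\le\e[\overline\sigma_N]\le s_0$ you end up with a lower constant $0$ and an upper constant of order $s_0$, not constants tending to $1$. What is missing is the paper's level-set decomposition: introduce the restricted free energy $\f^{[a,b]}$ with the constraint $\overline\sigma_N\in[a,b]$, prove $\f=\sup_{x\in[0,s_0]}\f^{\{x\}}$, and prove the comparison for each $\f^{\{x\}}$ by taking $a\uparrow x$, $b\downarrow x$. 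Under the restriction one has $a\le\e^{[a,b]}[\overline\sigma_N]\le b$, so both derivatives are pinned to $[a,b]$ (up to the factor $\beta c^\pm$), the two integrations match up to the ratio $b/a\to 1$, and the constants come out right. The case $x=0$ then needs separate treatment (there the Gibbs mean degenerates; the paper shows $\f^{\{0\}}$ is a constant independent of $\beta,h,\delta$). Without this decomposition the ``derivative $\Rightarrow$ free energy'' step simply does not go through.

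Second, a more technical but also essential point: the additive Lipschitz bound $\partial\log Z/\partial\omega_n\in[0,\beta s_0]$, which you cite, is not strong enough. You need \emph{multiplicative} control of the Gibbs mean $f_n(\omega,y):=\e^{[a,b]}_{N,\omega,\beta,h}|_{\omega_n=y}[\sigma_n]$ as a function of $y$: the paper uses $\partial_y\log f_n(\omega,y)=\beta\,\var^{[a,b]}[\sigma_n]/\e^{[a,b]}[\sigma_n]\le\beta s_0$ (valid because $0\le\sigma_n\le s_0$, so $\var[\sigma_n]\le s_0\e[\sigma_n]$ — this is where nonnegativity enters in an essential way, not FKG, which is not assumed and is not used) to get $e^{-s_0\beta(y-y')^-}f_n(\omega,y')\le f_n(\omega,y)\le e^{s_0\beta(y-y')^+}f_n(\omega,y')$. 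The additive bound alone would lose an $O(\beta)$ additive error, which is not compatible with $C^\pm_{\beta,\delta}\to 1$; the Gronwall-type multiplicative bound yields a $(1+O(\beta))$ relative error, which is exactly what gives the optimal constants. Your appeal to ``FKG-type monotonicity'' is a red herring: no correlation-inequality hypothesis on $(\sigma_i)$ is available or needed, only their range $[0,s_0]$.

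The rest of your outline (why $\epsilon_0$ guarantees integrability, $C^\pm_{0,0}=1$ from $\bbvar(\omega_1)=1$, continuity of the constants from dominated convergence, the sign bookkeeping for $\delta<0$) agrees in spirit with the paper. But as written the central derivative-to-free-energy step fails without the restricted-free-energy machinery, so the proof has a genuine gap.
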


\noindent
The proof of Theorem~\ref{th:compdeltah} is given in Section~\ref{sec:tiltshift}. The 
general strategy and consists in showing that the derivatives of $\f(\beta,h;\delta)$ 
with respect to $\delta$ and $h$ are comparable. Compared to Giacomin and
Toninelli~\cite{cf:GT1}, several estimates need to be sharpened considerably.


\subsubsection{$\bullet $ Smoothing}

Combining Theorems~\ref{th:smoothing} and~\ref{th:compdeltah}, we finally obtain 
our smoothing inequality with respect to a shift, with explicit control on the constant.

\begin{theorem}[Smoothing inequality with respect to a disorder shift]
\label{th:smoshift}
Subject to Assumptions~{\rm \ref{ass:disorder}}, {\rm \ref{ass:model}} and 
{\rm \ref{ass:model2}}, there is an $\epsilon'_0 > 0$ with the following property:
if $\f(\bar\beta, \bar h; 0) = 0$ for some $\bar\beta \in (0,\epsilon'_0)$ and 
$\bar h \in \R$, then for $t \in (-\bar\beta \epsilon'_0, \bar\beta \epsilon'_0)$,
\begin{equation} 
\label{eq:compdeltah2}
0 \leq \f ( \bar\beta, \bar h + t; 0) \leq
\frac{\gamma}{2 \bar\beta^2} \, A_{\bar\beta,\frac{t}{\bar\beta}} \, t^2,
\end{equation}
where $(\beta,\delta) \mapsto A_{\beta,\delta}$ is continuous from $(0,\epsilon'_0) 
\times (\epsilon'_0, \epsilon'_0)$ to $(0,\infty)$, and is such that
\begin{equation}
	\lim_{(\beta,\delta) 
	\to (0,0)} A_{\beta,\delta} = 1 .
\end{equation}
\end{theorem}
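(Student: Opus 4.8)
The plan is to derive Theorem~\ref{th:smoshift} by straightforward composition of Theorem~\ref{th:smoothing} and Theorem~\ref{th:compdeltah}, so the work is mostly bookkeeping of constants and domains rather than new estimates. First I would fix the threshold: take $\epsilon'_0 := \epsilon_0 = \min\{t_0/2, t_0/(2s_0)\}$ as in Theorem~\ref{th:compdeltah} (shrinking it slightly if needed so that the continuity statements below are clean), and suppose $\f(\bar\beta,\bar h;0) = 0$ with $\bar\beta \in (0,\epsilon'_0)$. Given $t \in (-\bar\beta\epsilon'_0, \bar\beta\epsilon'_0)$, set $\delta := t/\bar\beta \in (-\epsilon'_0,\epsilon'_0)$, so that $t = \bar\beta\delta$ and $\bar\beta\delta$ is exactly the scale appearing in Theorem~\ref{th:compdeltah}.

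Next I would invert the shift-to-tilt comparison. By Theorem~\ref{th:compdeltah} (and using that $h \mapsto \f(\beta,h;0)$ is non-decreasing, which holds here because $\sigma_i \geq 0$ in Assumption~\ref{ass:model2}), the value $\f(\bar\beta,\bar h + t;0) = \f(\bar\beta, \bar h + \bar\beta\delta;0)$ is sandwiched between $\f(\bar\beta,\bar h;\delta')$ for two tilt values $\delta'$ with $C^\mp_{\bar\beta,\delta'}\,\bar\beta\delta'$ straddling $\bar\beta\delta$; since $\delta' \mapsto C^\pm_{\bar\beta,\delta'}\delta'$ is strictly increasing (the last sentence of Theorem~\ref{th:compdeltah}), there is a well-defined $\delta^* = \delta^*(\bar\beta,\delta)$ with $C^{+}_{\bar\beta,\delta^*}\,\bar\beta\delta^* = \bar\beta\delta$ when $\delta \geq 0$ (and the analogous choice with $C^-$ when $\delta < 0$), and monotonicity of $\f$ in $h$ gives
\begin{equation}
\label{eq:plan-sandwich}
\f(\bar\beta,\bar h + t;0) \leq \f(\bar\beta,\bar h;\delta^*).
\end{equation}
The relation $C^{+}_{\bar\beta,\delta^*}\,\delta^* = \delta$ shows $\delta^* = \delta / C^{+}_{\bar\beta,\delta^*}$; in particular $|\delta^*| \leq |\delta|/c$ for a constant $c>0$ uniform on compacts, so $\delta^* \in (-t_0,t_0)$ and Theorem~\ref{th:smoothing} applies. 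Plugging \eqref{eq:plan-sandwich} into \eqref{eq:smodelta} yields
\begin{equation}
\label{eq:plan-chain}
0 \leq \f(\bar\beta,\bar h + t;0) \leq \frac{\gamma}{2}\,B_{\delta^*}\,(\delta^*)^2
= \frac{\gamma}{2}\,B_{\delta^*}\,\frac{\delta^2}{(C^{+}_{\bar\beta,\delta^*})^2}
= \frac{\gamma}{2\bar\beta^2}\,\frac{B_{\delta^*}}{(C^{+}_{\bar\beta,\delta^*})^2}\,t^2,
\end{equation}
using $\delta = t/\bar\beta$. This already has the desired shape, so I would \emph{define}
\begin{equation}
A_{\beta,\delta} := \frac{B_{\delta^*(\beta,\delta)}}{\bigl(C^{+}_{\beta,\delta^*(\beta,\delta)}\bigr)^2}
\end{equation}
(with $C^-$ in place of $C^+$ when $\delta < 0$), which by \eqref{eq:plan-chain} makes \eqref{eq:compdeltah2} hold.

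It remains to verify the regularity of $A_{\beta,\delta}$. The map $\delta^*$ is obtained by inverting $(\beta,\delta')\mapsto C^{+}_{\beta,\delta'}\delta'$, which is jointly continuous (Theorem~\ref{th:compdeltah}) and strictly increasing in $\delta'$, hence $\delta^*$ is jointly continuous by the implicit/inverse function argument for monotone families; composing with the continuous functions $\delta \mapsto B_\delta$ and $(\beta,\delta')\mapsto C^{+}_{\beta,\delta'}$ shows $A_{\beta,\delta}$ is continuous. For the limit: as $(\beta,\delta)\to(0,0)$ we have $\delta^*\to 0$ (since $C^+_{0,0}=1$ forces the inverse to fix $0$), whence $B_{\delta^*}\to 1$ (stated in Theorem~\ref{th:smoothing}) and $C^{+}_{\beta,\delta^*}\to C^+_{0,0}=1$ (stated in \eqref{eq:propCpm}), giving $A_{\beta,\delta}\to 1$. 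The main — really the only — obstacle is the mild technical point of checking that the inverse function $\delta^*$ is genuinely well-defined and jointly continuous on the relevant domain (and that its range stays inside $(-t_0,t_0)$ after the possible shrinking of $\epsilon'_0$); everything else is direct substitution. I would also note the harmless asymmetry: for $\delta<0$ one uses $C^-$ on the upper bound and the roles of $C^\pm$ swap, exactly as in Theorem~\ref{th:compdeltah}, and the bound $\eqref{eq:compdeltah2}$ is stated with $t^2$ so the sign of $t$ is immaterial.
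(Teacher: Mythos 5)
Your overall strategy matches the paper's: compose Theorem~\ref{th:compdeltah} with Theorem~\ref{th:smoothing} by inverting the map $\delta' \mapsto C_{\beta,\delta'}\,\delta'$, then read off the constant. But you invert the \emph{wrong} constant, and this breaks the key inequality. For $\delta\ge0$ you define $\delta^*$ by $C^{+}_{\bar\beta,\delta^*}\,\bar\beta\delta^* = \bar\beta\delta = t$ and then assert $\f(\bar\beta,\bar h + t;0) \leq \f(\bar\beta,\bar h;\delta^*)$. This does not follow. The inequality in \eqref{eq:compdeltah} that produces an \emph{upper} bound on $\f(\bar\beta,\bar h+t;0)$ is the left one,
\begin{equation*}
\f\big(\beta,\,h + C^{-}_{\beta,\delta'}\beta\delta';\,0\big) \leq \f(\beta,h;\delta'),
\end{equation*}
and it applies with equality of shifts when $C^{-}_{\bar\beta,\delta^*}\bar\beta\delta^* = t$; so you must invert $\delta'\mapsto C^{-}_{\beta,\delta'}\delta'$ (the paper's $F_\beta$), not $C^{+}$. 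With your $\delta^*$ one has $C^{-}_{\bar\beta,\delta^*}\bar\beta\delta^* \leq t$ (since $C^-\le C^+$), so monotonicity of $\f$ in $h$ runs the wrong way: it yields $\f(\bar\beta,\bar h + C^{-}_{\bar\beta,\delta^*}\bar\beta\delta^*;0)\leq\f(\bar\beta,\bar h+t;0)$, which cannot be chained with the display above; and the right inequality of \eqref{eq:compdeltah} with your $\delta^*$ gives only the \emph{lower} bound $\f(\bar\beta,\bar h;\delta^*) \leq \f(\bar\beta,\bar h + t;0)$. A telltale sign is that your $A_{\beta,\delta} = B_{\delta^*}/(C^+_{\beta,\delta^*})^2$ would be no larger (generically strictly smaller) than the version with $C^-$, i.e.\ you would be claiming a better constant than the estimate actually supports.

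Replacing $C^+$ by $C^-$ --- i.e.\ setting $\delta^* := F_{\bar\beta}^{-1}(t/\bar\beta)$ with $F_\beta(\delta') := C^-_{\beta,\delta'}\delta'$ --- repairs the step and recovers the paper's proof: $\f(\bar\beta,\bar h+t;0)\leq\f(\bar\beta,\bar h;\delta^*)\leq\tfrac{\gamma}{2}B_{\delta^*}(\delta^*)^2$, hence $A_{\beta,\delta}=B_{F_\beta^{-1}(\delta)}\big(F_\beta^{-1}(\delta)/\delta\big)^2$, and your continuity argument together with $C^\pm_{0,0}=1$, $\lim_{\delta\to0}B_\delta=1$ gives the stated limit. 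Two smaller remarks: $\epsilon'_0$ should be taken so that $F_\beta^{-1}$ is defined on $(-\epsilon'_0,\epsilon'_0)$ uniformly in $\beta\in(0,\epsilon'_0)$ (this is the point you flag as ``mild technical''); and the paper avoids the signed case altogether by observing that $\f\geq0$ under Assumption~\ref{ass:model} and $h\mapsto\f$ is non-decreasing, so $\f(\bar\beta,\bar h;0)=0$ already forces $\f(\bar\beta,\bar h+t;0)=0$ for $t\le0$, and one may assume $t>0$ throughout.
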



\subsection{Discussion}
\label{sec:discussion}

We comment on the results obtained in Section~\ref{sec:theorems}.

\medskip\noindent
\textbf{1.}
The version of the smoothing inequality in Theorem~\ref{th:smoshift}, \emph{with the 
precision on the constant}, is picked up and used in Berger, Caravenna, Poisat, Sun 
and Zygouras~\cite{cf:BCPSZ} to obtain the sharp asymptotics of the critical 
curve $\beta \mapsto h_c(\beta)$ for pinning and copolymer models in the 
weak disorder regime $\beta \downarrow 0$, for the case $\alpha \in (1,\infty)$ (recall 
\eqref{eq:astau}).

\medskip\noindent
\textbf{2.} 
The smoothing inequality in \eqref{eq:compdeltah2},
\emph{at the level of generality at which it is stated}, is optimal in the following sense.
\begin{itemize}
\item 
We cannot hope for an exponent strictly larger than $2$ in the right-hand side of 
\eqref{eq:compdeltah2}, because pinning models with $\P(\tau_1 = n) \sim (\log n)/n^{3/2}$ 
are in the ``irrelevant disorder regime'', and it is known that $\f(\beta,h_c(\beta)+t; 0) 
\sim \f(0,h_c(0)+t; 0) = t^{2 + o(1)}$ as $t\downarrow 0$ for fixed $\beta > 0$ small 
enough (see Alexander \cite[Theorem 1.2]{cf:Ken}).
\item 
We cannot hope for an asymptotically smaller constant, i.e., $\lim_{(\beta,\delta) \to 
(0,0)} A_{\beta,\delta} < 1$, because the proof in Berger, Caravenna, Poisat, Sun and
Zygouras~\cite{cf:BCPSZ} would yield a contradiction (the lower bound would be 
strictly larger than the upper bound).
\end{itemize}
Of course, for specific models the inequality \eqref{eq:compdeltah2} can
sometimes be strengthened.
For instance, pinning models satisfying \eqref{eq:astau} with $\alpha \in (0,\frac{1}{2})$
are such that $\f(\beta,h_c(\beta)+t; 0) \sim \f(0,h_c(0)+t; 0) = t^{1/\alpha + o(1)}$
as $t \downarrow 0$ (see \eqref{eq:hompin}), again by Alexander \cite[Theorem 1.2]{cf:Ken}.

\medskip\noindent
\textbf{3.}
Compared with Assumption~\ref{ass:model}, Assumption~\ref{ass:model2} prescribes 
a specific form for the partition function $Z_{N, \omega, \beta, h}$ and therefore is 
more restrictive. On the other hand, in view of the minor constraints put on the $\sigma_i$'s, 
\eqref{eq:ZNgen} is so general that the absence of any restrictive conditions like 
\eqref{it:superadd} or \eqref{it:polylb} makes Assumption~\ref{ass:model2} effectively 
much weaker than Assumption~\ref{ass:model}. For instance, since \eqref{eq:model} 
is a special case of \eqref{eq:ZNgen}, with $\p_N (\cdot) = \p(\,\cdot\, \cap \{S_N = 0\})$
(which, incidentally, explains why $\p_N$ is allowed to be a finite measure, and not 
necessarily a probability), the model in \eqref{eq:model} satisfies Assumption~\ref{ass:model2}
as soon as the function $\phi$ is bounded and has a sign, without the need for any requirement 
like \eqref{eq:astau}. 

We emphasize that many other (also non-directed) disordered models 
fall into Assumption~\ref{ass:model2}. For instance, for $L \in\N$ set $\Lambda_L := \{-L, 
\ldots, +L\}^d$, $N := |\Lambda_L| = (2L+1)^d$, $\Omega_N := \{-1,+1\}^{\Lambda_L}$, 
and let $(\eta_i)_{i\in\Lambda_L}$ be the coordinate projections on $\Omega_N$. If $\p_N$ 
is the standard Ising Gibbs measure on $\Omega_N$, defined by $\p_N(\{\eta_i\}_{i\in\Lambda_L}) 
:= (1/Z_N) \exp[ J \sum_{i,j \in \Lambda_L, \,|i-j| = 1} \eta_i \eta_j]$, then the random variables 
$\sigma_i := \frac{1}{2}(\eta_i + 1)$ satisfy Assumption~\ref{ass:model2}.

\medskip\noindent
\textbf{4.} 
It follows easily from \eqref{eq:freeen} and \eqref{eq:ZNgen} that (with obvious notation)
\begin{equation} 
\label{eq:compa}
\f_{(\sigma_n + c)_{n\in\N}} ( \beta, h; \delta ) 
= \f_{(\sigma_n)_{n\in\N}} ( \beta, h; \delta ) + (\beta m_\delta + h) c.
\end{equation}
Therefore, when the $\sigma_n$'s are uniformly bounded but not necessarily non-negative, we can 
first perform a uniform translation to transform them into non-negative random variables, 
next apply \eqref{eq:compdeltah}, and finally use \eqref{eq:compa} to come back to the 
original $\sigma_n$'s. 

Still, the non-negativity assumption on the $\sigma_n$'s in \eqref{eq:s0}
cannot be dropped from Theorem~\ref{th:compdeltah}.
In fact, if $\f(\beta,h;\delta)$ is differentiable in $h$ and $\delta$, then
\eqref{eq:compdeltah} implies that
\begin{equation} 
\label{eq:tocheck}
\forall h \in \R\colon \qquad
\frac{\partial\f}{\partial \delta}(\beta,h ; 0) 
= \big[ 1 + o(1) \big]\,\beta \, \frac{\partial\f}{\partial h}(\beta,h ; 0),
\qquad \beta \downarrow 0.
\end{equation}
This relation, which is a necessary condition for \eqref{eq:compdeltah} when the free energy is 
differentiable, may be violated when the $\sigma_n$'s take both signs.
For instance, let $(\sigma_n)_{n\in\N}$ under $\p_N := \p$ be i.i.d.\ with
$\p(\sigma_n = -1) = \p(\sigma_n = +1) = \frac{1}{2}$, and let the marginal distribution 
of the disorder be $\bbP(\omega_n = -a^{-1}) = a^2/(a^2+1)$, $\bbP(\omega_n = a) 
= 1/(a^2+1)$ with $a > 0$ (note that $\bbE(\omega_1) = 0$ and $\bbvar(\omega_1) = 1$,
so that  \eqref{eq:t0} is satisfied). The free energy is easily computed:
\begin{equation}
\f(\beta,h ; \delta) = \bbE_\delta[ \cosh(h + \beta \omega_1) ]
= \frac{e^{a\delta} \cosh(h + a\beta) + a^2 e^{-a^{-1}\delta} \cosh(h - a^{-1}\beta)}
{e^{a\delta} + a^2 e^{-a^{-1}\delta}}.
\end{equation}
In particular,
\begin{align}
\frac{\partial\f}{\partial h} (\beta,0 ; 0) 
& = \frac{\sinh(a\beta) + a^2 \sinh(-a^{-1}\beta)}{1 + a^2} 
= \frac{a^2 - 1}{6a} \beta^3  + o(\beta^3),\\
\frac{\partial\f}{\partial \delta}(\beta, 0 ; 0) 
& = \frac{a \cosh(a\beta) - a \cosh(- a^{-1}\beta)}{1 + a^2} 
= \frac{a^2 - 1}{2a} \, \beta^2 \,+\, o(\beta^2),
\end{align}
and hence \eqref{eq:tocheck} does \emph{not} hold for $a \ne 1$ (the left-hand side 
is $\approx \beta^2$, while the right-hand side is $\approx \beta^4$). 
Intuitively, such a discrepancy
arises for values of $h$ at which $\frac{\partial\f}{\partial h}(0,h;0) = 0$, which means that
the average $\e_{N,\omega, 0,h} (\frac{1}{N}\sum_{n=1}^N \sigma_n)$
tends to zero as $N\to\infty$,
where $\p_{N,\omega, \beta,h}$ is the Gibbs law
associated to the partition function $Z_{N, \omega, \beta, h}$
(see \eqref{eq:PZ} below).
When the $\sigma_n$'s are non-negative, their individual 
variances under $\p_{N,\omega, 0,h}$ must be small, but this is no longer true when 
the $\sigma_n$'s can also take negative values.
This is why one might have
$\frac{\partial\f}{\partial \delta}(\beta,h;0) \gg \beta\frac{\partial\f}{\partial h}(\beta,h;0)$
for $\beta > 0$ small
(compare \eqref{eq:parth} with \eqref{eq:fnoy}-\eqref{eq:pias} below).


\smallskip

\section{Smoothing with respect to a tilt: proof of Theorem~\ref{th:smoothing}}
\label{sec:protilt}


\subsection{The $(\cG,\cC)$-rare stretch strategy}
\label{sec:rare-stretch-strategy}

Fix $\beta \geq 0$ and $h \in \R$. For $\ell\in\N$, let 
$\cA_{\ell} \subseteq \R^{\ell}$ be a subset of ``disorder stretches'' such that 
there exist constants $\cG \in [0,\infty)$ and $\cC \in [0,\infty)$ with the following 
properties, along a diverging sequence of $\ell \in \N$:
\begin{itemize}
\item 
$\frac{1}{\ell} \log Z_{\ell,\omega,\beta,h} \geq \cG$ for all $\omega = \omega_{(0,\ell]} 
:= (\omega_1, \ldots, \omega_{\ell}) \in \cA_\ell$ (recall Assumption~\ref{ass:model}
\eqref{it:basic});
\item $\frac{1}{\ell} \log \bbP(\cA_\ell) \geq - \cC$.
\end{itemize}
The notation $(\cG,\cC)$ stands for \emph{gain} versus \emph{cost}. Recall that 
$\gamma$ is the exponent in \eqref{eq:polylb}.

\begin{lemma}
\label{lem:CG}
The following implication holds:
\begin{equation} 
\label{eq:loccond}
\cG - \gamma\, \cC \, >  0 \quad \Longrightarrow  \quad \f(\beta, h ; 0) > 0.
\end{equation}
\end{lemma}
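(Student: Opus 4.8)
The plan is to carry out the rare stretch strategy in coarse-grained form, following Giacomin and Toninelli~\cite{cf:GT2}. Fix $\beta\ge 0$, $h\in\R$, and work under $\bbP$ (that is, $\delta=0$) throughout. Choose $\ell\in\N$ from the diverging sequence along which $\cA_\ell$ has the two stated properties, and set $p_\ell:=\bbP(\cA_\ell)\ge e^{-\cC\ell}>0$. For $N=k\ell$ split $\{1,\dots,N\}$ into the consecutive blocks $B_j:=(j\ell,(j+1)\ell]$, $0\le j<k$, and call $B_j$ good when $(\omega_{j\ell+1},\dots,\omega_{(j+1)\ell})\in\cA_\ell$; since $\omega$ is i.i.d.\ and the blocks are disjoint, $(\ind_{\{B_j\ \mathrm{good}\}})_{0\le j<k}$ are i.i.d.\ Bernoulli$(p_\ell)$. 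The strategy keeps every good block and fuses each maximal interval of sites not covered by a good block into one sub-interval $R$ (of length $|R|$, a multiple of $\ell$). Iterating the super-additivity in Assumption~\ref{ass:model}\eqref{it:superadd} along this decomposition of $\{1,\dots,N\}$, bounding the good-block factors below by $e^{\cG\ell}$ (legitimate by Assumption~\ref{ass:model}\eqref{it:basic}) and each fused-run factor below by $c_{\beta,h}(\theta^{\min R-1}\omega)/|R|^{\gamma}$ via \eqref{it:polylb}, gives
\[
\log Z_{N,\omega,\beta,h}\ \ge\ \cG\,\ell\cdot\#\{\,j<k:\ B_j\ \mathrm{good}\,\}\ -\ \gamma\sum_{R}\log|R|\ +\ \sum_{R}\log c_{\beta,h}\big(\theta^{\min R-1}\omega\big),
\]
the sums running over the fused bad runs.

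Next I would divide by $N=k\ell$, let $k\to\infty$ with $\ell$ fixed, and identify the almost-sure limit with $\f(\beta,h;0)$ via Remark~\ref{rem:kingman}. By the law of large numbers the good blocks have density $p_\ell$, so the first term contributes $\cG\,p_\ell$ per site. The fused bad runs are at most $p_\ell$ per block and their lengths sum to at most $N$, so concavity of $\log$ together with $\log(1/p_\ell)\le\cC\ell$ bounds $\gamma\sum_R\log|R|$ by $\gamma\,p_\ell\big(\cC+\tfrac{\log\ell}{\ell}\big)$ per site. Finally, since $\bbE[\log c_{\beta,h}(\omega)]>-\infty$ by \eqref{it:polylb} at $\delta=0$, and since $c_{\beta,h}(\theta^{\min R-1}\omega)$ is independent of the event that the block preceding $R$ is good, the last term costs at most $\tfrac{p_\ell}{\ell}$ times a finite constant per site. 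Altogether
\[
\f(\beta,h;0)\ \ge\ p_\ell\Big(\cG-\gamma\cC-\gamma\,\tfrac{\log\ell}{\ell}-\tfrac{C_0}{\ell}\Big)
\]
for some finite $C_0=C_0(\beta,h)\ge 0$. Since $\cG-\gamma\cC>0$, the parenthesis is strictly positive for all large $\ell$ in the admissible sequence; fixing one such $\ell$ and using $p_\ell>0$ yields $\f(\beta,h;0)>0$.

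The step I expect to require genuine care is the book-keeping that makes this ``typical'' picture rigorous: the LLN/ergodic statements for the number of good blocks and for the empirical mean of $\log|R|$ over the (i.i.d.\ geometric) bad runs, and the control of the mild conditioning bias in the $c_{\beta,h}$ terms, handled by the independence of disjoint blocks under $\bbP$ as indicated. A secondary technical point---present already in \cite{cf:GT2}---is that \eqref{it:polylb} is only assumed along a subsequence of $N$, whereas the fused bad runs have uncontrolled lengths $|R|$; this is harmless because in the cases of interest the bound holds along a subsequence of positive density, in fact for all large $N$ for pinning and copolymer models (restrict the expectation in \eqref{eq:model} to $\{\tau_1=N\}$, as in Remark~\ref{rem:pincop}), and one then restricts the construction to block lengths $\ell$ inside that subsequence. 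The remaining steps are a direct transcription of the corresponding argument in \cite{cf:GT2}.
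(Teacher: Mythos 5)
Your proposal is correct and follows essentially the same route as the paper's proof: the $(\cG,\cC)$-rare-stretch strategy, applying super-additivity to a decomposition of $\{1,\dots,N\}$ into good $\ell$-blocks and fused bad runs, bounding the good blocks by $e^{\cG\ell}$ and the bad runs via \eqref{it:polylb}, then passing to the limit via the LLN and Kingman (as in Remark~\ref{rem:kingman}) and controlling the $\log|R|$ terms by concavity. The paper organizes the same decomposition through the i.i.d.\ geometric waiting times $T_i$ (so that the good-block frequency and the empirical mean of $\log T_i$ are handled directly by the classical LLN and Jensen's inequality, giving the bound $\f(\beta,h;0)\ge e^{-\cC\ell}\{(\cG-\gamma\cC)+\bbE[\log c_{\beta,h}]/\ell-\gamma(\log\ell)/\ell\}$), but this is a bookkeeping variant of your block-and-fuse picture, and both hit the same $\ell\to\infty$ conclusion. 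Your closing remark about the subsequence restriction in \eqref{it:polylb} is a genuine subtlety, also present in the paper's argument, and your resolution (restrict to block lengths in the sub-lattice $\textsc{t}\N$ of Remark~\ref{rem:kingman}) is the right one.
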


\begin{proof}
Fix $\ell \in \N$ large enough so that the above conditions hold, and for $\omega \in \R^\N$ 
denote by $T_1(\omega), T_2(\omega), \ldots$ the distances between the endpoints of 
the stretches in $\cA_\ell$:
\begin{equation}
T_1(\omega) := \inf\big\{N \in \ell\N\colon\, \omega_{(N-\ell,
N]} \in \cA_{\ell}\big\},
\qquad 
T_{k+1}(\omega) := T_{1}(\theta^{T_{1}(\omega) + \ldots + T_{k}(\omega)}(\omega)).
\end{equation}
Note that $\{T_k\}_{k\in\N}$ is i.i.d.\ with marginal law given by $\ell \,\mathrm{GEO}
(\bbP(\cA_{\ell}))$. In particular,
\begin{equation}
	\bbE(T_1) = \ell / \bbP(\cA_{\ell}) \le \ell \, e^{\cC \ell}.
\end{equation}
Henceforth we suppress the subscripts $\beta,h$. 
Since $(\theta^{(T_1 + \ldots + T_{i})-\ell} \omega)_{(0,\ell]} \in \cA_\ell$
by construction, applying
properties \eqref{it:superadd}-\eqref{it:polylb} in Assumption~\ref{ass:model}
and the definition of $\cG$, we get
\begin{equation}
Z_{T_1 + \ldots + T_k,\omega} 
\geq \prod_{i=1}^k Z_{T_i - \ell, \theta^{(T_1 + \ldots + T_{i-1})} \omega}
\, Z_{\ell, \theta^{(T_1 + \ldots + T_{i})-\ell} \omega}
\geq e^{k \cG \ell} \, \prod_{i=1}^k 
\frac{c_{\beta,h}(\theta^{(T_1 + \ldots + T_{i-1})}\omega)}{(T_i)^\gamma},
\end{equation}
where we set $Z_0 := 1$ for convenience.
Recalling \eqref{eq:freeen} and Remark~\ref{rem:kingman},
for $\bbP$-a.e. $\omega$ we can write, by the strong 
law of large numbers and Jensen's inequality,
\begin{equation}
\begin{split}
\f(\beta, h ; 0) 
& 
= \lim_{k\to\infty} \frac{1}{T_1 + \ldots + T_k}
\log Z_{T_1 + \ldots + T_k,\omega,\beta,h} \\
& \geq \frac{1}{\bbE(T_1(\omega))} \big\{ \ell \cG + \bbE[\log c_{\beta,h}(\omega)]
- \gamma \, \bbE[\log (T_1) ] \big\} \\
& \geq \frac{1}{\bbE(T_1(\omega))} \big\{ \ell \cG + \bbE[\log c_{\beta,h}(\omega)]
- \gamma \, \log \bbE(T_1) \big\} \\
& = e^{-\cC \ell} \bigg\{ (\cG - \gamma \cC )
+ \frac{\bbE[\log c_{\beta,h}(\omega)]}{\ell} - \gamma \frac{\log \ell}{\ell} \bigg\}.
\end{split}
\end{equation}
If $\cG - \gamma \cC > 0$, then we can choose $\ell \in \N$ large enough (but finite!) 
such that the right-hand side is strictly positive. This proves \eqref{eq:loccond}.
\end{proof}


\subsection{Proof of Theorem~\ref{th:smoothing}}
We use Lemma~\ref{lem:CG}. Fix $\beta > 0$, $h \in \R$,
$\delta \in (-t_0, t_0)$ and $\epsilon > 0$, 
and define the set of good atypical stretches as
\begin{equation}
\cA_{\ell} := \bigg\{ (\omega_1, \ldots, \omega_\ell) \in \R^\ell\colon\, 
\frac{1}{\ell}\log Z_{\ell, \omega, \beta, h} \geq \f(\beta, h; \delta) - \epsilon \bigg\},
\end{equation}
so that $\cG = \f(\beta, h; \delta) - \epsilon $ by construction. It remains to determine $\cC$, for 
which we need to estimate the probability of $\bbP(\cA_\ell)$ from below. 

By the definition \eqref{eq:freeen} of $\f(\beta, h; \delta)$ together with Kingman's super-additive
ergodic theorem (see Remark~\ref{rem:kingman}),
the event $\cA_\ell$ is typical for ${ \bbP}_\delta$:
\begin{equation} 
\label{eq:pdelta1}
\lim_{\ell\to+\infty} { \bbP}_\delta(\cA_\ell) = 1.
\end{equation}
Denoting by $\bbP_\delta^\ell$ (resp. $\bbP^\ell$) the restriction of $\bbP_\delta$
(resp. $\bbP$) on $\sigma(\omega_1, \ldots, \omega_\ell)$, we have, by Jensen's inequality
and \eqref{eq:RNn},
\begin{equation} \label{eq:entrin}
\begin{split}
	\bbP(\cA_\ell) & = \bbP_\delta(\cA_\ell) \,
	\bbE_\delta\bigg( e^{- \log \frac{\dd \bbP^\ell_\delta}{\dd\bbP^\ell}}
	\bigg| \cA_\ell \bigg) \ge
	\bbP_\delta(\cA_\ell) \,
	e^{- \bbE_\delta \big(\log \frac{\dd \bbP^\ell_\delta}{\dd\bbP^\ell}
	\big| \cA_\ell \big)} \\
	& = \bbP_\delta(\cA_\ell) \,
	e^{- \frac{1}{\bbP_\delta(\cA_\ell)}
	\bbE_\delta \big[ \big(
	\log \frac{\dd \bbP^\ell_\delta}{\dd\bbP^\ell}\big)  \, \ind_{\cA_\ell} \big]} \\
	& =  \bbP_\delta(\cA_\ell) \,
	e^{- \frac{\ell}{\bbP_\delta(\cA_\ell)}
	\bbE_\delta \big[ \big(
	\delta \frac{\omega_1 + \ldots + \omega_\ell}{\ell} - \log \M(\delta)
	\big)  \, \ind_{\cA_\ell} \big]}\,.
\end{split}
\end{equation}
Recalling \eqref{eq:RNn} and Assumption~\ref{ass:disorder}, we abbreviate
\begin{equation}
\label{eq:mdelta0}
m_\delta := \bbE_\delta(\omega_1) = (\log\M)'(\delta)
= \delta + o(\delta), \qquad \delta \to 0.
\end{equation}
By the strong law of large numbers, it follows from \eqref{eq:pdelta1}-\eqref{eq:entrin}
that for every $\epsilon > 0$ we have, for $\ell$ large enough,
\begin{equation}
\frac{1}{\ell} \log \bbP(\cA_\ell) \geq 
- \big[ \delta \, m_\delta - \log \M(\delta) \big] \,-\, \epsilon =: -\cC,
\end{equation}


We can conclude.
We know from \eqref{eq:loccond} that $\f(\beta, h; 0) > 0$ when
\begin{equation}
\cG - \gamma \cC = \f(\beta, h; \delta) - \gamma \big[ \delta \, m_\delta 
- \log \M(\delta) \big] - 2 \epsilon > 0.
\end{equation}
If $\f(\bar\beta, \bar h) = 0$, as in the assumptions of Theorem~\ref{th:smoothing},
it follows that $\cG - \gamma \cC \leq 0$, 
i.e.,
\begin{equation}
\f(\bar\beta, \bar h; \delta) \leq \gamma 
\big[ \delta \, m_\delta - \log \M(\delta) \big] + 2 \epsilon, \qquad
\forall \delta \in (-t_0, t_0) \,.
\end{equation}
Since this equality holds for every $\epsilon > 0$, it must hold also for $\epsilon = 0$,
proving \eqref{eq:smodelta}.
\qed


\smallskip

\section{Asymptotic equivalence of tilting and shifting: proof of
Theorem~\ref{th:compdeltah}}
\label{sec:tiltshift}

Throughout this section, we work under Assumptions~{\rm \ref{ass:disorder}} 
and {\rm \ref{ass:model2}}.


\subsection{Notation}
Denote the empirical average of the variables $\sigma_i$'s by
\begin{equation} 
\label{eq:sigmabar}
\overline \sigma_N := \frac{1}{N} \sum_{i=1}^N \sigma_i.
\end{equation}
The finite-volume Gibbs measure associated with the partition function in 
\eqref{eq:ZNgen} is the \emph{probability} on $\Omega_N$
defined, for $N\in\N$, $\omega \in \R^\N$, $\beta \geq 0$ 
and $h\in\R$, by
\begin{equation}\label{eq:PZ}
\begin{split}
& \p_{N, \omega, \beta, h}(\,\cdot\,) 
:= \frac{1}{Z_{N, \omega, \beta, h}} 
\e_N \Big[e^{\sum_{n=1}^N (h + \beta \omega_n) \sigma_n} \, \ind_{\{\cdot\}} \Big] \,, \\
& \text{where} \quad
Z_{N, \omega, \beta, h} := \e_N \Big[ e^{\sum_{n=1}^N (h + \beta \omega_n) \sigma_n} \Big]
\,. \end{split}
\end{equation}
Let us spell out the definition \eqref{eq:freeen} of the free energy, recalling \eqref{eq:RNn}:
\begin{equation} 
\label{eq:freee}
\begin{split}
\f(\beta,h; \delta) 
&:= \limsup_{N\to\infty} \f_N(\beta,h; \delta) := \limsup_{N\to\infty} \frac{1}{N} 
\bbE_\delta\big[ \log Z_{N, \omega, \beta, h} \big]  \\
&= \limsup_{N\to\infty} \frac{1}{N} \bbE \big[ e^{\sum_{n=1}^N [\delta \omega_n
- \log\M(\delta)]} \log Z_{N, \omega, \beta, h} \big].
\end{split}
\end{equation}
Note that, by \eqref{eq:s0},
\begin{equation} 
\label{eq:boundham}
\Bigg| \sum_{n=1}^N (h + \beta \omega_n) \sigma_n \Bigg| 
\leq \sum_{n=1}^N (|h| + \beta |\omega_n|) \, |\sigma_n|
\leq s_0 \sum_{n=1}^N (|h| + \beta |\omega_n|),
\end{equation}
so that $|\f(\beta,h;\delta)| \leq s_0 (|h| + \beta \bbE_\delta(|\omega_1|))
\,+\, |\limsup_{N\to\infty} \frac{1}{N} \log \p_N(\Omega_N)| < \infty$.


\subsection{Preparation}

Before proving Theorem~\ref{th:compdeltah}, we need some preparation. Recalling 
\eqref{eq:sigmabar}, we define for $[a,b] \subseteq \R$ with $a<b$ a restricted version 
of the partition function and the free energy, in which the empirical average
$\overline\sigma_N$ is constrained to lie in $[a,b]$:
\begin{equation} 
\label{eq:freeeab}
\begin{split}
Z_{N, \omega, \beta, h}^{[a,b]} 
& := \e_N \Big[ e^{\sum_{n=1}^N (h + \beta \omega_n) \sigma_n} \, 
\ind_{\{\overline\sigma_N \in [a,b]\}} \Big],\\
\f^{[a,b]}(\beta,h; \delta) 
& := \limsup_{N\to\infty} \f^{[a,b]}_N(\beta,h; \delta) 
:= \limsup_{N\to\infty} \frac{1}{N} \bbE_\delta\big[ \log Z_{N, \omega, \beta, h}^{[a,b]} \big].
\end{split}
\end{equation}
The corresponding restricted Gibbs measure is the probability defined by (recall \eqref{eq:PZ})
\begin{equation} 
\label{eq:pNab}
\p_{N, \omega, \beta, h}^{[a,b]} (\,\cdot\,) 
:= \p_{N, \omega, \beta, h} (\,\cdot\,|\,\overline\sigma_N \in [a,b] )
= \frac{\e_N \Big[ e^{\sum_{n=1}^N (h + \beta \omega_n) \sigma_n} \, 
\ind_{\{\overline\sigma_N \in [a,b]\}} \, \ind_{\{\,\cdot\,\}} \Big]}{Z_{N, \omega, \beta, h}^{[a,b]}}.
\end{equation}

Note that $Z_{N, \omega, \beta, h} = Z_{N, \omega, \beta, h}^{[0,s_0]}$, by \eqref{eq:s0}.
Furthermore, $Z_{N, \omega, \beta, h}^{[a,b]} \le Z_{N, \omega, \beta, h}^{[c,d]}$ when 
$[a,b] \subseteq [c,d]$. Therefore
\begin{equation} 
\label{eq:monf}
\f^{[a,b]}(\beta,h;\delta) \leq \f^{[c,d]}(\beta,h;\delta) \,\le\, \f (\beta,h;\delta),
\qquad [a,b] \subseteq [c,d].
\end{equation}
In particular, for $x \in \R$ we may define
\begin{equation} 
\label{eq:fx}
\f^{\{x\}}(\beta,h;\delta) := \lim_{n \to \infty} \f^{[a_n, b_n]}(\beta,h;\delta)
\in [-\infty, +\infty),
\end{equation}
where $a_n \uparrow x$ and $b_n \downarrow x$ are arbitrary strictly monotone 
sequences (it is easily seen that the limit does not depend on the choice of these 
sequences). 

Note that $\f^{\{x\}}(\beta,h;\delta) = -\infty$ when $x \not\in [0,s_0]$, 
by \eqref{eq:s0}. The following result is standard:
\begin{equation}
\label{eq:fbetah}
\f(\beta,h;\delta) = \sup_{x \in [0,s_0]}  \f^{\{x\}}(\beta,h;\delta).
\end{equation}
In fact,
by \eqref{eq:monf} $\f^{[a,b]}(\beta,h;\delta) \le \f(\beta,h;\delta)$ for every $[a,b] \subseteq \R$,
hence by \eqref{eq:fx} $\f(\beta,h;\delta) \geq \f^{\{x\}}(\beta,h;\delta)$ for every 
$x\in\R$. It follows that the inequality $\geq$ holds in \eqref{eq:fbetah}. For the reverse inequality, 
note that if $a < b < c$, then $[a,c] \subseteq [a,b] \cup [b,c]$ and so
\begin{equation}
Z_{N, \omega, \beta, h}^{[a,c]} 
\leq Z_{N, \omega, \beta, h}^{[a,b]} + Z_{N, \omega, \beta, h}^{[b,c]} 
\leq 2 \, \max \Big\{ Z_{N, \omega, \beta, h}^{[a,b]},Z_{N, \omega, \beta, h}^{[b,c]} \Big\}.
\end{equation}
Recalling \eqref{eq:freeeab}, we see that
\begin{equation}
\f^{[a,c]}(\beta,h;\delta) \leq 
\max \Big\{ \f^{[a,b]}(\beta,h;\delta), \, \f^{[b,c]}(\beta,h;\delta) \Big\}.
\end{equation}
Since $\f(\beta,h;\delta) = \f^{[0,s_0]}(\beta,h;\delta)$, we can build a sequence of closed 
intervals $(I_n)_{n\in\N_0}$, where $I_0 = [0,s_0]$ and where $I_{n+1}$ is either the first 
half or the second half of $I_n$, such that $\f^{I_n}(\beta,h;\delta) \leq \f^{I_{n+1}}(\beta,h;\delta)$ 
for all $n\in\N$. In particular,
\begin{equation}
\f(\beta,h;\delta) \leq \lim_{n\to\infty} \f^{I_n}(\beta,h;\delta).
\end{equation}
By compactness, there exists an $\overline x \in [0,s_0]$ such that $I_n \downarrow 
\{\overline x\}$, i.e., $\bigcap_{n\in\N} I_n = \{\overline x\}$. If $I_n = [a_n, b_n]$, then we
set $J_n := [a_n - \frac{1}{n}, b_n + \frac{1}{n}]$, so that we still have $J_n \downarrow 
\{\overline x\}$, and $\overline x$ lies in the interior of each $J_n$. Since $\f^{I_n}(\beta,h;\delta) 
\leq \f^{J_n}(\beta,h;\delta)$, recalling \eqref{eq:fx} we obtain
\begin{equation}
\f(\beta,h;\delta) \leq \lim_{n\to\infty} \f^{I_n}(\beta,h;\delta)
\leq \lim_{n\to\infty} \f^{J_n}(\beta,h;\delta) 
= \f^{\{\overline x\}}(\beta,h;\delta) 
\leq \sup_{x \in [0,s_0]}  \f^{\{x\}}(\beta,h;\delta),
\end{equation}
and the proof of \eqref{eq:fbetah} is complete.


\subsection{Proof of Theorem~\ref{th:compdeltah}}

By \eqref{eq:fbetah}, it suffices to show that \eqref{eq:compdeltah} is satisfied
with $\f^{\{x\}}$ instead of $\f$, for every fixed $x \in [0, s_0]$. It is of course 
important that the constants $C^\pm_{\beta,\delta}$ do not depend on $x$.

\medskip\noindent
\textbf{1.}
First we consider the case $x = 0$. We claim that
\begin{equation} 
\label{eq:f0}
\f^{\{0\}}(\beta,h;\delta) = \lim_{\epsilon \downarrow 0}
\bigg( \limsup_{N\to\infty} \frac{1}{N} \log \p_N(0 \le \overline\sigma_N \le \epsilon) \bigg).
\end{equation}
Since the right-hand side of \eqref{eq:f0} is a constant that does not depend on $\beta \ge 0$,
$\delta \in (-t_0,t_0)$ and $h\in\R$, \eqref{eq:compdeltah} is trivially satisfied
with $\f^{\{0\}}$ instead of $\f$, whatever the 
definition of $C^\pm_{\beta,\delta}$ is. To prove \eqref{eq:f0} note that, by Cauchy-Schwarz,
\begin{equation}
\begin{split}
\Bigg| \sum_{n=1}^N (h + \beta \omega_n) \sigma_n \Bigg| 
& \leq \sqrt{\sum_{n=1}^N (h + \beta \omega_n)^2} \, \sqrt{\sum_{n=1}^N |\sigma_n|^2} 
\leq N \, s_0 \, \sqrt{\overline\sigma_N} \, \sqrt{\frac{1}{N} \sum_{n=1}^N (h + \beta \omega_n)^2},
\end{split}
\end{equation}
because $0 \leq \sigma_n = |\sigma_n| \leq s_0$ by \eqref{eq:s0}. Recalling \eqref{eq:freeeab}, 
for every $N\in\N$ we get
\begin{equation}
\begin{split}
\bigg| \frac{1}{N} \bbE_\delta \big[ \log Z^{[a,b]}_{N,\omega,\beta,h} \big] 
- \frac{1}{N} \log \p_N(a \le \overline\sigma_N \le b) \bigg| 
&\leq  s_0 \, \sqrt{b} \, \bbE_\delta \left[ \sqrt{\frac{1}{N} 
\sum_{n=1}^N (h + \beta \omega_n)^2} \right] \\
& \leq s_0 \, \sqrt{b} \,\sqrt{\bbE_\delta \big[ (h + \beta \omega_1)^2 \big]},
\end{split}
\end{equation}
where we use Jensen. Note that the right-hand side is a finite constant. If $|a_N - b_N| \leq c$ for 
all $N\in\N$, then $|\limsup_N a_N - \limsup_N b_N| \leq c$, and so
\begin{equation}
\bigg| \f^{[a,b]}(\beta,h;\delta) 
-  \bigg(\limsup_{N\to\infty} \frac{1}{N} \log \p_N(a \le \overline\sigma_N \le b) \bigg) \bigg|
\leq s_0 \, \sqrt{b} \,\sqrt{\bbE_\delta \big[ (h + \beta \omega_1)^2 \big]}.
\end{equation}
Taking $[a,b] = [-\epsilon, \epsilon]$ and letting $\epsilon \downarrow 0$, we get \eqref{eq:f0}
from \eqref{eq:fx} .

\medskip\noindent
\textbf{2.}
Next we consider the case $x \in (0, s_0]$. Roughly speaking, the strategy of the proof is to 
show that the derivatives of the free energy with respect to $\delta$ and to $h$ are comparable.
Unless otherwise specified, we work with generic values of the parameters in the admissible 
range $\beta \geq 0$, $h\in\R$ and $\delta \in (-t_0, t_0)$. Henceforth we fix $0 < a < b < \infty$. 
Recalling \eqref{eq:freeeab} and \eqref{eq:pNab}, we see that the derivative with respect to $h$ 
of the (restricted) finite-volume free energy $\f^{[a,b]}_N(\beta, h;\delta)$ can be expressed as
\begin{equation} 
\label{eq:parth}
\frac{\partial}{\partial h} \f^{[a,b]}_N(\beta, h;\delta) 
= \frac{1}{N} \bbE_\delta\bigg[ \frac{\partial}{\partial h} \log Z^{[a,b]}_{N, \omega, \beta, h} \bigg] 
= \bbE_\delta \big[ \e^{[a,b]}_{N, \omega, \beta, h}\big[ \overline\sigma_N \big] \big].
\end{equation}

\medskip\noindent
\textbf{3.}
The derivative with respect to $\delta$ requires some further estimates. Recalling
\eqref{eq:PZ}-\eqref{eq:freee}, we have
\begin{equation} 
\label{eq:start}
\frac{\partial}{\partial\delta} \f^{[a,b]}_N(\beta, h; \delta) 
= \frac{1}{N} \sum_{n=1}^N \bbE_\delta \Big[ (\omega_n - m_\delta) \,
\log Z^{[a,b]}_{N, \omega, \beta, h} \Big],
\end{equation}
where $m_\delta := \bbE_\delta(\omega_n) = (\log \M)'(\delta)$ by \eqref{eq:mdelta0}.
Subtracting a centering term with zero mean, we get
\begin{equation} 
\label{eq:ddf}
\begin{split}
\frac{\partial}{\partial\delta} \f^{[a,b]}_N(\beta, h; \delta) 
& = \frac{1}{N} \sum_{n=1}^N \bbE_\delta \Big[ (\omega_n - m_\delta) \,
\Big( \log Z^{[a,b]}_{N, \omega, \beta, h} 
- \log Z^{[a,b]}_{N, \omega, \beta, h}|_{\omega_n = m_\delta} \Big) \Big] \\
& = \frac{1}{N} \sum_{n=1}^N \bbE_\delta \Bigg[ (\omega_n - m_\delta) 
\int_{m_\delta}^{\omega_n} \bigg( \frac{\partial}{\partial\omega_n} 
\log Z^{[a,b]}_{N, \omega, \beta, h} \bigg) \bigg|_{\omega_n = y} \, \dd y \Bigg],
\end{split}
\end{equation}
where we agree that $\int_a^b (\ldots) := -\int_{b}^a(\ldots)$ when $a > b$.
Abbreviate
\begin{equation} 
\label{eq:fnoy}
f_n(\omega, y) := \frac{1}{\beta} \, \bigg( \frac{\partial}{\partial \omega_n} 
\log Z^{[a,b]}_{N, \omega, \beta, h} \bigg)\bigg|_{\omega_n = y}
= \e^{[a,b]}_{N, \omega, \beta, h}|_{\omega_n = y}[\sigma_n],
\end{equation}
where the second equality follows easily from \eqref{eq:freeeab} via \eqref{eq:pNab}.
Note that $f_n(\omega,y)$ depends on the $\omega_i$'s for $i \ne n$, not on 
$\omega_n$. Therefore \eqref{eq:ddf} can be rewritten as
\begin{equation} 
\label{eq:pias}
\frac{\partial}{\partial\delta} \f^{[a,b]}_N(\beta, h; \delta) 
= \frac{\beta}{N} \sum_{n=1}^N \bbE_\delta \Big[ (\omega_n - m_\delta)^2 \,
\frac{1}{\omega_n - m_\delta} \int_{m_\delta}^{\omega_n} f_n(\omega,y) \, \dd y \Big].
\end{equation}

\medskip\noindent
\textbf{4.}
By \eqref{eq:fnoy}, the integral average in \eqref{eq:pias} should be close to
$\e^{[a,b]}_{N, \omega, \beta, h}[\sigma_n]$. If we could factorize the expectation
over $\bbE_\delta$, then the right-hand side in \eqref{eq:pias} would become 
$\approx \beta \, \bbvar_\delta(\omega_1)\, \bbE_\delta \big[ \e^{[a,b]}_{N, \omega, 
\beta, h}[\overline\sigma_N] \big]$. Recalling \eqref{eq:parth}, we see that this is 
precisely what we want, because $\bbvar_\delta(\omega_1) \approx 1$ for $\delta$ 
small. In order to turn these arguments into a proof, we need to estimate the dependence 
of $f_n(\omega, y)$ on $y$. To that end we note that
\begin{equation}
\label{eq:varus2}
\begin{split}
\frac{\partial}{\partial \omega_n} f_n(\omega, \omega_n) 
& = \frac{1}{\beta} \frac{\partial^2}{\partial \omega_n^2}
\log Z^{[a,b]}_{N, \omega, \beta, h} \,=\, \beta \, 
\var_{N, \omega, \beta, h}^{[a,b]}[\sigma_n] \\
& \leq \beta \, \e^{[a,b]}_{N, \omega, \beta, h} [\sigma_n^2] 
\leq  s_0 \, \beta \, \e^{[a,b]}_{N, \omega, \beta, h}[\sigma_n] 
= s_0 \, \beta \,  f_n(\omega, \omega_n)
\end{split}
\end{equation}
because $0 \le \sigma_n \le s_0$, by \eqref{eq:s0}. Therefore
\begin{equation}
\frac{\partial}{\partial y} f_n(\omega, y) \geq 0, 
\qquad \frac{\partial}{\partial y} \big( e^{-s_0 \, \beta \, y} \, f_n(\omega, y) \big) \leq	0,
\end{equation}
and integrating these relations we get
\begin{equation} 
\label{eq:ustec}
e^{-s_0 \beta  (y - y')^-} \, f_n(\omega, y') 
\leq f_n(\omega, y) \leq e^{s_0 \beta  (y - y')^+} \, f_n(\omega, y') 
\qquad \forall\, y,y' \in \R.
\end{equation}
Introducing the function
\begin{equation} 
\label{eq:defg}
g(x) := \begin{cases}
\displaystyle
\frac{e^{x} - 1}{x} 
& \text{if } x \ne 0, \\
1 
& \text{if } x = 0,
\end{cases}
\end{equation}
taking $y' = m_\delta$ in \eqref{eq:ustec} and integrating over $y$, we easily obtain the bounds
\begin{equation} 
\label{eq:g-+est}
\begin{split}
g\big(-\beta s_0(\omega_n - 
& m_\delta)^-\big) \, f_n(\omega, m_\delta) \\
& \leq \frac{1}{\omega_n - m_\delta}
\int_{m_\delta}^{\omega_n} f_n(\omega,y) \, \dd y 
\leq g\big(\beta s_0(\omega_n - m_\delta)^+\big) \, f_n(\omega, m_\delta).
\end{split}
\end{equation}

\medskip\noindent
\textbf{5.}
Before inserting this estimate into \eqref{eq:pias}, let us pause for a brief integrability interlude.
The random variable $g(-\beta s_0(\omega_n - m_\delta)^-)$ is bounded, so there is no integrability 
concern. On the other hand, the random variable $g(\beta s_0(\omega_n - m_\delta)^+)$ is 
unbounded and a little care is required. Note that
\begin{equation}
g(\beta s_0(\omega_n - m_\delta)^+) \leq A + B \, e^{\beta s_0 \omega_n }
\end{equation}
for $A,B>0$, and that $\bbE_\delta(e^{t\omega_1}) < \infty$ for $t + \delta \in (-t_0, +t_0)$, by 
\eqref{eq:t0} and \eqref{eq:RNn}. Therefore, when we integrate  $g(\beta s_0(\omega_n - 
m_\delta)^+)$ (possibly times a polynomial of $\omega_n$) over $\bbP_\delta$, to have a 
finite outcome we need to ensure that $\beta s_0 + \delta \in (-t_0, +t_0)$. This is simply 
achieved through the restrictions $\delta \in (-\epsilon_0 ,\epsilon_0)$ and $\beta \in [0,\epsilon_0)$,
where $\epsilon_0 := \min\{\frac{t_0}{2},\frac{t_0}{2s_0}\}$,
as in the statement of Theorem~\ref{th:compdeltah}.
We make these restrictions henceforth.

\medskip\noindent
\textbf{6.}
Let us now substitute the estimate \eqref{eq:g-+est} into \eqref{eq:pias}. Since $f_n(\omega, 
m_\delta)$ does not depend on $\omega_n$, the expectation over $\bbE_\delta$ factorizes 
and we obtain
\begin{equation} 
\label{eq:aatt}
\begin{split}
\bbE_\delta \Big[ (\omega_1 - m_\delta)^2 \, 
& g\big(-\beta s_0 (\omega_1 - m_\delta)^-\big) \Big]
\, \Bigg( \frac{\beta}{N} \sum_{n=1}^N
\bbE_\delta \Big[ f_n(\omega, m_\delta) \Big] \Bigg) \\
& \leq \frac{\partial}{\partial\delta} \f^{[a,b]}_N(\beta, h; \delta) \\
& \leq \bbE_\delta \Big[ (\omega_1 - m_\delta)^2 \, g\big(\beta s_0 (\omega_1 - m_\delta)^+\big) \Big]
\, \Bigg( \frac{\beta}{N} \sum_{n=1}^N \bbE_\delta \Big[ f_n(\omega, m_\delta) \Big] \Bigg).
\end{split}
\end{equation}
We next want to replace $f_n(\omega, m_\delta)$ by $f_n(\omega, \omega_n) 
= \e^{[a,b]}_{N, \omega, \beta, h} \big[ \sigma_n \big]$ (recall \eqref{eq:fnoy}). 
To this end, we again apply \eqref{eq:ustec}, this time with $y = \omega_n$ and 
$y' = m_\delta$. Since $f_n(\omega, m_\delta)$ does not depend on $\omega_n$, 
we have
\begin{equation} 
\label{eq:aatt+}
\begin{split}
\frac{\bbE_\delta \big[ f_n(\omega, \omega_n) \big]}
{\bbE_\delta \big[e^{s_0 \beta  (\omega_1 - m_\delta)^+ } \big]}
\leq \bbE_\delta \Big[ f_n(\omega, m_\delta) \Big] 
& \leq \frac{\bbE_\delta \big[ f_n(\omega, \omega_n) \big]}
{\bbE_\delta \big[ e^{-s_0 \beta (\omega_1 - m_\delta)^-} \big]}.
\end{split}
\end{equation}
We can now introduce the constants
\begin{equation} 
\label{eq:c+-}
\begin{split}
c^+_{\beta, \delta} 
& := \frac{\bbE_\delta \big[ (\omega_1 - m_\delta)^2 
\, g\big(\beta s_0 (\omega_1 - m_\delta)^+\big) \big]}
{\bbE_\delta \big[ e^{-s_0 \beta (\omega_1 - m_\delta)^-} \big]}, \\
c^-_{\beta, \delta} 
& := \frac{\bbE_\delta \big[ (\omega_1 - m_\delta)^2 
\, g\big(-\beta s_0 (\omega_1 - m_\delta)^-\big) \big]}
{\bbE_\delta \big[ e^{s_0 \beta  (\omega_1 - m_\delta)^+ } \big]},
\end{split}
\end{equation}
and note that $0 < c^-_{\beta, \delta} \le c^+_{\beta, \delta} < \infty$ for all $\delta 
\in (-\epsilon_0, \epsilon_0)$ and $\beta \in [0,\epsilon_0)$. We have already observed 
that $f_n(\omega, \omega_n) = \e^{[a,b]}_{N, \omega, \beta, h} \big[ \sigma_n \big]$
by \eqref{eq:fnoy}, and so from \eqref{eq:aatt}-\eqref{eq:aatt+} we obtain the following 
estimate: for every $\beta \in [0,\epsilon_0)$, $h\in\R$, $\delta \in (-\epsilon_0, \epsilon_0)$ 
and $0 < a < b <\infty$
\begin{equation} 
\label{eq:partdelta}
c^-_{\beta,\delta} \, \beta \, \bbE_\delta \big[ \e^{[a,b]}_{N, \omega, \beta, h}
\big[ \overline\sigma_N \big] \big] 
\leq \frac{\partial}{\partial\delta} \f^{[a,b]}_N(\beta, h; \delta)
\leq c^+_{\beta,\delta} \, \beta \, \bbE_\delta \big[ \e^{[a,b]}_{N, \omega, \beta, h}
\big[ \overline\sigma_N \big] \big].
\end{equation}
Note the analogy with the expression in \eqref{eq:parth} for $\frac{\partial}{\partial h}
\f^{[a,b]}_N(\beta, h; \delta)$.

\medskip\noindent
\textbf{7.}
We are close to the final conclusion. Since by \eqref{eq:pNab} we have $a \leq 
\e^{[a,b]}_{N, \omega, \beta, h}\big[ \overline\sigma_N \big] \leq b$, it follows from 
\eqref{eq:partdelta} that, for every $\delta \in [0,\epsilon_0)$
\begin{equation} 
\label{eq:cru0}
C^-_{\beta,\delta} \, \beta \, a \, \delta 
\leq \f^{[a,b]}(\beta, h; \delta) - \f^{[a,b]}(\beta, h; 0) 
\leq C^+_{\beta,\delta} \, \beta \, b \, \delta,
\end{equation}
where we set
\begin{equation}
\label{eq:C+-}
C^\pm_{\beta,\delta} := 
\begin{cases}
\displaystyle \frac{1}{\delta} \int_0^\delta 
c^\pm_{\beta,\delta'} \, \dd \delta' 
& \text{if } \delta \in (-\epsilon_0, \epsilon_0) \setminus\{0\}, \\
c^\pm_{\beta,0} 
& \text{if } \delta = 0.
\end{cases}
\end{equation}
Analogously to \eqref{eq:cru0}, from \eqref{eq:parth} we obtain, for every $\xi \geq 0$,
\begin{equation} 
\label{eq:cru00}
a \, \xi \leq \f^{[a,b]}(\beta, h + \xi ; 0) -
\f^{[a,b]}(\beta, h ; 0) \leq b \, \xi.
\end{equation}
Choosing $\xi = C^+_{\beta,\delta} \frac{b}{a} \beta \delta$ and $\xi = C^-_{\beta,\delta} 
\frac{a}{b} \beta \delta$, respectively, and combining \eqref{eq:cru0}-\eqref{eq:cru00}, we 
finally get the following relation, which holds for all $\beta, \delta \in [0,\epsilon_0)$, $h\in\R$ 
and $0 < a < b < \infty$:
\begin{equation} 
\label{eq:cru00+}
\f^{[a,b]} \big( \beta, h + C^-_{\beta,\delta} \tfrac{a}{b} \beta \delta ; 0 \big) 
\leq \f^{[a,b]}(\beta, h| \delta) 
\leq \f^{[a,b]} \big( \beta, h + C^+_{\beta,\delta} \tfrac{b}{a} \beta \delta ; 0 \big).
\end{equation}
Next, fix any $x > 0$ and $\eta > 0$. If $a_n \uparrow x$ and $b_n \downarrow x$, then
$a_n/b_n \geq 1-\eta$ and $b_n/a_n \leq 1+\eta$ for large $n$. Since $h \mapsto \f^{[a,b]}
(\beta, h;\delta)$ is non-decreasing, by \eqref{eq:parth} and \eqref{eq:s0}, 
for $n$ large enough we have
\begin{equation}
\f^{[a_n,b_n]} \big( \beta, h + C^-_{\beta,\delta} (1-\eta) \beta \delta ; 0 \big) 
\leq \f^{[a_n,b_n]}(\beta, h; \delta) 
\leq \f^{[a_n,b_n]} \big( \beta, h + C^+_{\beta,\delta} (1+\eta) \beta \delta ; 0 \big).
\end{equation}
Recalling \eqref{eq:fx} and \eqref{eq:fbetah}, we can let $n\to\infty$ to get that, for every 
$x > 0$,
\begin{equation}
\f^{\{x\}} \big( \beta, h + C^-_{\beta,\delta} (1-\eta) \beta \delta ; 0 \big) 
\leq \f^{\{x\}}(\beta, h; \delta) 
\leq \f^{\{x\}} \big( \beta, h + C^+_{\beta,\delta} (1+\eta) \beta \delta ; 0 \big).
\end{equation}
This relation also holds for $x=0$ because $\f^{\{0\}}(\beta, h; \delta)$ is a constant, as we 
showed in \eqref{eq:f0}. Taking the supremum over $x \in [0, s_0]$, we have 
shown that, for all $\beta, \delta \in [0,\epsilon_0)$ and $h\in\R$,
\begin{equation}
\f \big( \beta, h + C^-_{\beta,\delta} (1-\eta) \beta \delta ; 0 \big) 
\leq \f(\beta, h; \delta) 
\leq \f \big( \beta, h + C^+_{\beta,\delta} (1+\eta) \beta \delta ; 0 \big).
\end{equation}
Since $h \mapsto \f^{[a,b]}(\beta, h;\delta)$ is convex and finite, and hence continuous, we 
can let $\eta \downarrow 0$ to obtain \eqref{eq:compdeltah} for $\delta \in [0,\epsilon_0)$.

\medskip\noindent
\textbf{8.}
The case $\delta \in (-\epsilon_0, 0]$ is analogous. The inequality in \eqref{eq:cru0} is replaced by
\begin{gather} 
\label{eq:cru0neg}
C^-_{\beta,\delta} \, \beta \, a \, (-\delta) 
\leq \f^{[a,b]}(\beta, h; 0) \,-\, \f^{[a,b]}(\beta, h; \delta)
\leq C^+_{\beta,\delta} \, \beta \, b \, (-\delta),
\end{gather}
while \eqref{eq:cru00} for $\xi \leq 0$ becomes
\begin{equation} 
\label{eq:cru00neg}
a \, (-\xi)  \leq
\f^{[a,b]}(\beta, h ; 0) - \f^{[a,b]}(\beta, h + \xi ; 0) \leq b \, (-\xi).	
\end{equation}
Choosing $\xi = C^+_{\beta,\delta} \frac{b}{a} \beta \delta$ and $\xi = C^-_{\beta,\delta} 
\frac{a}{b} \beta \delta$, respectively, we get
\begin{equation} 
\label{eq:cru00+neg}
\f^{[a,b]} \big( \beta, h + C^+_{\beta,\delta} \tfrac{b}{a} \beta \delta ; 0 \big) 
\leq \f^{[a,b]}(\beta, h; \delta) 
\leq \f^{[a,b]} \big( \beta, h + C^-_{\beta,\delta} \tfrac{a}{b} \beta \delta ; 0 \big).
\end{equation}
It remains to let $a \uparrow x$, $b \downarrow x$, followed by taking the supremum
over $x \in [0,s_0]$.

\medskip\noindent
\textbf{9.}
Finally, by \eqref{eq:C+-}, we have $0 < C^-_{\beta,\delta} \le C^+_{\beta,\delta} 
< \infty$ for all $\beta \in [0,\epsilon_0)$ and $\delta \in (-\epsilon_0, \epsilon_0)$. By 
dominated convergence, $(\beta,\delta) \mapsto c^\pm_{\beta,\delta}$ are continuous 
on $[0,\epsilon_0) \times (-\epsilon_0,\epsilon_0)$, and hence also $(\beta,\delta) \mapsto 
C^\pm_{\beta,\delta}$ is continuous. 
Since $C^\pm_{0,0} = \bbvar(\omega_1) = 1$,
the proof is complete.
\qed

\smallskip

\section{Smoothing with respect to a shift: proof of Theorem~\ref{th:smoshift}}
\label{sec:smoothshift}

Equations \eqref{eq:ZNgen} and \eqref{eq:s0} imply that $h \mapsto \f(\beta,h;\delta)$ 
is non-decreasing. Since $\f(\beta,h;\delta) \geq 0$ under Assumption~\ref{ass:model},
by \eqref{eq:polylb}, if $\f(\bar\beta, \bar h; 0) = 0$, then $\f(\bar\beta, \bar h + t; 0) = 0$ 
for all $t \leq 0$, and \eqref{eq:compdeltah2} is trivially satisfied. Henceforth we assume
$t > 0$.

Recalling the statement of Theorem~\ref{th:compdeltah}, we set $F_\beta(\delta) 
:= C^-_{\beta,\delta} \, \delta$. This is a continuous and strictly increasing function 
of $\delta$, with $F_\beta(0) = 0$, and hence it maps the open interval $(0,\epsilon_0)$
into $(0,\epsilon'_0)$, for some $\epsilon'_0 > 0$. Applying the first inequality in 
\eqref{eq:compdeltah} for $t \in (0, \bar\beta\epsilon'_0)$, we can write
\begin{equation}
\f(\bar\beta, \bar h + t; 0) 
= \f\big(\bar\beta, \bar h + \bar\beta F_{\bar\beta}
(F_{\bar\beta}^{-1}(\tfrac{t}{\bar\beta})); 0\big) 
\leq \f\big(\bar\beta,\bar h; F_{\bar \beta}^{-1}(\tfrac{t}{\bar\beta}) \big).
\end{equation}
Applying \eqref{eq:smodelta}, we obtain
\begin{equation}
\f(\bar\beta, \bar h + t; 0) 
\leq \frac{\gamma}{2 \bar\beta^2} \, A_{\bar\beta,\frac{t}{\bar\beta}} \, t^2,
\end{equation}
where
\begin{equation}
A_{\beta,\delta} := B_{F_{\beta}^{-1}(\delta)} \, 
\bigg(\frac{F_{\beta}^{-1}(\delta)}{\delta} \bigg)^2.
\end{equation}
It follows from \eqref{eq:propCpm} that
$\lim_{(\beta,\delta) \to (0,0)} (F_{\beta}^{-1}(\delta)/\delta) = 1$.
Since $\lim_{\delta\to 0} B_\delta = 1$, we obtain $\lim_{(\beta,\delta) \to (0,0)} A_{\beta,\delta} 
= 1$.
\qed

%
%
%
%


\bigskip

\end{document}